\theoremstyle{remark}
\newtheorem{example}{\textbf{Example}}[section]
\numberwithin{equation}{section}
\newcommand\figcaption{\def\@captype{figure}\caption}
\newcommand\tabcaption{\def\@captype{table}\caption}
\def\bq{\begin{equation}}
\def\eq{\end{equation}}
\def\br{\begin{eqnarray}}
\def\er{\end{eqnarray}}
\def\brr{\bq\begin{array}{rlll}}
\def\err{\end{array}\eq}
\def\text#1{\hbox{#1}}
\newtheorem{thm}{Theorem}[section]
\newtheorem{rem}{Remark}[section]
\newcommand{\bsub}{\begin{subequations}}
\newcommand{\esub}{\end{subequations}$\!$}
\title[ Positivity-preserving schemes for reduced PNP system]{Unconditional positivity-preserving and energy stable schemes for a reduced Poisson-Nernst-Planck system}
\author{  Hailiang Liu and Wumaier Maimaitiyiming}
\address{Iowa State University, Mathematics Department, Ames, IA 50011}
\email{hliu@iastate.edu}\email{wumaierm@iastate.edu} 
\keywords{Biological channels, diffusion models, ion transport, positivity}
\subjclass{65N08, 65N12, 92C35}
\begin{document}

%\linenumbers
%%%%%%%%%%%%%%%%%%%%%%%%%%%%%%%%%%%%%%%%%%%%%%%%%%%%%%%%%%%%%%%%%%%%%%%%%%%%%%%%%%%%%%
%abstract
%%%%%%%%%%%%%%%%%
\begin{abstract} 
The Poisson-Nernst-Planck (PNP) system is a widely accepted model for simulation of ionic channels. In this paper, we design, analyze,
and numerically validate  a second order unconditional positivity-preserving scheme for solving a reduced PNP system, which can well approximate the three dimensional ion channel problem.  Positivity of numerical solutions is proven to hold true independent of the size of time steps and the choice of the Poisson solver.   The scheme is easy to implement without resorting to any iteration method. Several numerical examples further confirm the positivity-preserving property, and demonstrate the accuracy, efficiency,  and robustness of the proposed scheme, as  well as the fast approach to steady states.
\end{abstract}

\maketitle
%%%%%%%%%%%%%%%%%%%%%%%%%%%%%%%%%%%%%%%%%%%%%%%%%%%%%%%%%%%%%%%%%%%%%%%%%%%%%%%%%%%%%%
%Introduction
%%%%%%%%%%%%%%%%%%%%%%%%%%%%%%%%%%%%%%%%%%%%%%%%%%%%%%%%%%%%%%%%%%%%%%%%%%%%%%%%%%%%%%

\section{Introduction} 
Biological cells exchange chemicals and electric charge with their environments through ionic channels in the cell membrane walls. Examples include signaling in the nervous system and coordination of muscle contraction,  see \cite{Cu09} for a comprehensive introduction.  Mathematically the flow of ions can be modeled by drift-diffusion equations such as the Poisson-Nernst-Planck (PNP) system, see e.g. \cite{Coalson05,Eis96,Eis07,Eis04}.  

In this investigation we design, analyze and numerically validate positivity-preserving algorithms to solve time-dependent drift-diffusion equations. 
As a first step, in this paper we focus on a reduced model derived by Gardner et al \cite{Eis04} as an approximation to the full  three dimensional (3D) 
PNP system.  Let us first recall the full model and its reduction. 

\subsection{Mathematical models}  The  general setup in \cite{Eis04} is a flow of positive and negative ions in water in a channel plus surrounding baths in an electric field against a background of charged atoms on the channel protein. The distribution of charges is described by continuum particle densities $c_i(\mathbf{x},t)$ for the mobile ions (such as $K^+, N_a^+, C_a^{++},\cdots $). The flow of ions can be modeled by the PNP system of $m$ equations 
\begin{equation}\label{PNP0}
\begin{aligned}
   \partial_t c_i&= -\nabla\cdot J_i,\quad i=1, \cdots, m; \; \mathbf{x}\in \Omega \subset \mathbb{R}^3, \; t>0, \\
   J_i & = -(D_i\nabla c_i+z_i\mu_ic_i\nabla \psi),\\
-\nabla \cdot(\epsilon \nabla \psi)&=\sum_{i=1}^{m}q_ic_i-e\rho,
\end{aligned}
\end{equation}
where $J_i$ is the flux density, in which $D_i$ is the diffusion coefficient, $\mu_i$ the mobility coefficient which is related to the diffusion coefficient via Einstein's relation $\mu_i=\frac{D_i}{k_BT_0}$, where $k_B$ is the Boltzmann constant and $T_0$ is the absolute temperature \cite{Cu09}. 
In the Poisson equation,  $\epsilon$ is the dielectric coefficient, $q_i$ the ionic charge for each ion species $i$, $\rho=\rho(\mathbf{x})$ the permanent fixed charge density, and $e$ the proton charge. The coupling parameter $z_i = q_i/e$.  In general, the physical parameters $\epsilon$, $\mu_i$  and $D_i$ are functions of $\mathbf{x}$.  Let us mention that the case of no permanent charge does not pertain to biological channels. Even channels without permanent charge (in the form of so called acid and base side chains) have large amounts of fixed charge in their (for example) carbonyl bonds( see, e.g., \cite{WS15} and references therein).

 The derivation of the Nernst-Planck equation typically follows two steps, namely, using the energy variation to obtain the chemical potential and then using Fick's laws of 
 diffusion to attain the Nernst-Planck equation (see e.g. \cite{FB97}).  In the charge dynamics modeled by the traditional NP equation, mobile ions are treated as volume-less 
 point charges. In order to incorporate more complex effects such as short-range steric effect and long range Coulomb correlation, modifications of the PNP equations 
 were derived ( see, e.g., \cite{PXZ18} and references therein). Nonetheless, the scheme methodology proposed in this paper 
 can well be adapted to solve such modified PNP systems. 

The 3D geometry of the ion channel can be approximated by a reduced problem along the axial direction $x$, with a cross-sectional area $A(x)$ \cite{SN09, SGNE08}.  Subject to a further rescaling as in  \cite{GLE2018}, the  corresponding PNP system (\ref{PNP0}) reduces to 
the following equations 
 \begin{equation}\label{PNP2}
\begin{aligned}
  \partial_t c_i&=\frac{1}{A(x)} \partial_x (A(x)D_i(\partial_x c_i+z_ic_i\partial_x \psi)),  \quad   x \in \Omega=[0,\ 1],  \quad  t>0,\\
 -\frac{1}{A(x)} \partial_x (\epsilon A(x) \partial_x \psi)&=\sum_{i=1}^{m}z_ic_i-\rho(x),  \quad    x \in \Omega,  \quad  t>0.
   \end{aligned}
\end{equation}
For ionic channels, an important characteristic is the so-called current-voltage relation, which can characterize permeation and selectivity properties of ionic channels (see \cite{WS-IV-08} and references therein). For  (\ref{PNP0}), the electric current density (charge flux) is $J=\sum_{i=1}^m q_i J_i$.
Such quantity for (\ref{PNP2}) reduces to
 \begin{equation}\label{J}
J=-\sum_{i=1}^m z_i D_iA(x) ( \partial_x c_i + z_ic_i \partial_x \psi).
\end{equation}
System (\ref{PNP2})  is a parabolic/elliptic system of partial differential equations, boundary conditions for both $c_i$ and $\psi$ can be Dirichlet or Neumann.  

In order to solve the above reduced system,  we consider initial data 
$$
c_i(x, 0)=c_i^{\rm in}(x) \geq 0, \quad x\in \Omega. 
$$

\subsection{Boundary conditions and model properties}  We consider two types of boundary conditions.   The first is the Dirichlet boundary condition, 
\begin{equation}\label{Di}
 c_i(0,t)=c_{i,l}, \quad c_i(1,t)=c_{i,r}; \quad \psi(0,t)=0, \psi(1,t)=V,\quad   t>0,
\end{equation}
where $c_{i,l}, c_{i, r}$ are non-negative constants, and $V$ is a given constant. This is the setting adopted in \cite{Eis04}.  One important solution property is 
\begin{equation}\label{po}
c_i(x, t) \geq 0, \quad x\in \Omega, \; t>0.
\end{equation}
Another set of boundary conditions is as follows: 
\begin{equation}\label{Zf}
\begin{aligned}
&\partial_x c_i+z_ic_i\partial_x \psi=0,\quad x=0,1, \quad t>0,\\
& (- \eta \partial_x \psi+\psi)|_{x=0}=\psi_{_-},\quad ( \eta \partial_x \psi+\psi)|_{x=1}=\psi_{_+}, \quad t>0,
\end{aligned}
\end{equation}
where $\psi_{_-}, \psi_{_+}$ are given constants, the size of parameter $\eta $ depends on the properties of the surrounding membrane \cite{Eis13}.  Here the first one is the zero-flux boundary condition for the transport equation, and the second is the Robin boundary condition for the Poisson equation.  Such boundary condition is adopted in \cite{Eis13} to model the effects of partially removing the potential from the ends of the channel. For system (\ref{PNP2}) with this boundary condition,  solutions have non-negativity, mass conservation,  and free energy dissipation properties, i.e.,  (\ref{po}),  
\begin{equation}\label{mass}
\int_{\Omega}A(x)c_i(x,t)dx=\int_{\Omega}A(x)c_i^{in}(x)dx, \quad  t>0,\quad i=1,\cdots,m, \; \text{and} 
\end{equation}

\begin{equation}\label{energydiss1}
\frac{dE}{dt}=-\int _{\Omega} \sum_{i=1}^m A(x)D_ic_i|\partial _{x} ( \log c_i+z_i\psi)|^2 dx  \leq 0        ,
\end{equation}
where the total energy $E$ associated to (\ref{PNP2}) is defined (see \cite{Eis13}) by
\begin{equation}\label{energy1}
E=\int_{\Omega}A(x) \bigg(\sum_{i=1}^m c_i\log c_i+\frac{1}{2}(\sum_{i=1}^mz_ic_i-\rho)\psi\bigg) d x+\frac{\epsilon}{2 \eta }(\psi_{_+}A(1)\psi(1)+\psi_{_-}A(0)\psi(0)).
\end{equation}
The positivity-preserving property is of special importance, since negative values in density  would violate the physical meaning of the solution and may destroy the energy dissipation law (\ref{energydiss1}). Numerical techniques addressing the positivity preserving property have been introduced in various application problems, see e.g.
 \cite{JSY19,LYY18}. In this paper, we construct second order accurate unconditional positivity-preserving schemes for solving (\ref{PNP2}) subject to  two types of boundary conditions. For the zero-flux boundary condition, the schemes will be shown to satisfy mass conservation and a discrete energy dissipation law. 

\subsection{Related works} 
Numerical methods for solving the PNP system of equations have been studied extensively;  see e.g., \cite{Eis04,HP16,HEL11,LHMZ10, ZCW11}. We also refer to \cite{Gwei16} for a review on the PNP model and its generalizations for ion channel charge transport.

For the reduced PNP system (\ref{PNP2}),
the finite difference scheme with TR--BDF2 time integration was first pursued in  \cite{Eis04} to simulate an ionic channel. For the one dimensional PNP system, the second order implicit finite difference scheme proposed in \cite{Eis13} can preserve total concentration of ions with the aid of a special boundary discretization,  but numerical solutions may not be positive or energy dissipating. An improved scheme, further introduced in \cite{Flavell7}, can preserve a discrete form of energy dissipation law up to $O(\tau^2+h^2)$, where $\tau$ is the time step, and $h$ is the spatial mesh size. In \cite{CH19} the authors proposed an adaptive conservative finite volume method on a moving mesh that maintains solution positivity. The second oder finite difference scheme in \cite{LW14} is explicit and shown to preserve positivity, mass conservation,  and energy dissipation, while the positivity-preserving property is ensured if $\tau=O(h^2)$. Further extension in \cite{LW17} is a free energy satisfying discontinuous Galerkin scheme of any high order, where positivity-preserving property is realized by limiting techniques. The finite element scheme obtained by the method of lines approach in \cite{Me16} preserves positivity of the solutions and a discrete energy dissipation law. Recently in \cite{Hjingwei19} the authors presented a fully implicit finite difference method where both positivity and energy decay are preserved. In their scheme a fixed point iteration is needed for solving the resulting nonlinear system.  These schemes are either explicit or fully implicit in time, the former require a time step restriction for preserving the desired properties  while the later  preserve desired properties unconditionally but they had to be solved by some iterative solvers.

In this paper we design schemes to preserve all three desired properties of solution: positivity, mass conservation, and energy dissipation, by following  \cite{LiuWu2018}, in which a second order finite-volume method was constructed for the class of nonlinear nonlocal equations
\begin{equation}\label{LiuWu}
\partial_t c =\nabla \cdot (\nabla c+c \nabla (V(\mathbf{x})+W*c )).
\end{equation}
The key ingredients include a reformulation  of  the equation in its non-logarithmic Landau form and the use of the implicit-explicit time discretization, these together ensure the positivity-preserving property without any restriction on the size of time steps (unconditional!) and do not require iterative solvers. 

\subsection{Contributions and organization of the paper} 
Our scheme construction is based on the reformulation
\begin{equation}\label{RF}
   A(x)\partial_t c_i(x,t) =\partial_x (A(x)D_ie^{-z_i\psi(x,t)}\partial_x( c_i(x,t)e^{z_i\psi(x,t)})),
\end{equation}
of the transport equation in (\ref{PNP2}). Similar formulation has been used in \cite{LiuWu2018} and in earlier works \cite{LY12, LW14}. Following \cite{LiuWu2018},  we adopt a semi-implicit time discretization of (\ref{RF}):
\begin{equation}\label{time}
A(x)\frac{c_i^{n+1}(x)-c_i^{n}(x)}{\tau}=\partial_x \left(    A(x)D_i e^{-z_i \psi^n(x)} \partial_x (c_i^{n+1}(x) e^{z_i \psi^n(x)}) \right).
\end{equation}
The feature of such discretization is that it is a linear equation in $c_i^{n+1}(x)$, and easy to solve numerically. For spatial discretization, we use the central finite volume approach. The coefficient matrix of the resulting linear system is an M-matrix and right hand side is a nonnegative vector, thus positivity of the solution is ensured without any time step restriction.

The main contribution in this paper includes the model reformulation, proofs of unconditional positivity-preserving properties for two types of  boundary conditions,  and of mass conservation and energy dissipation properties for zero flux boundary conditions (\ref{Zf}). In addition, the positivity-preserving property is shown to be independent of the choice of Poisson solvers. Our implicit-explicit scheme is easy to implement and efficient in computing numerical solutions over long time.

The paper is organized as follows. In section 2, we derive our numerical scheme for a model equation. Theoretical analysis of unconditional positivity is provided. In section 3, we formulate our scheme to the PNP system and prove positivity, mass conservation and energy dissipation properties of the scheme. Numerical examples are presented in section 4. Finally, concluding remarks are given in section 5.
%%%%%%%%%%%%%%%%%%%%%%%%%%%%%%%%%%%%%%%%%%%%%%%%%%%%%%%%%%%%%%%%%%%%%%%%%%%%%%%%%%%%%%
%Numerical Method
%%%%%%%%%%%%%%%%%%%%%%%%%%%%%%%%%%%%%%%%%%%%%%%%%%%%%%%%%%%%%%%%%%%%%%%%%%%%%%%%%%%%%%

\section{Numerical methods for a model equation}
%%%%%%%%%%%%%%%%%%%%%%%%%%%%%%%%%%%%%%%%%%%%%%%%%%%%%%%%%%%%%%%%%%%%%%%%%%%%%%%%%%%%%%
%Semi-discrete scheme
%%%%%%%%%%%%%%%%%%%%%%%%%%%%%%%%%%%%%%%%%%%%%%%%%%%%%%%%%%%%%%%%%%%%%%%%%%%%%%%%%%%%%%
In this section,  we first demonstrate the key ideas through a model problem. 
Let $u(x, t)$ be an unknown density, satisfying  
\begin{equation}\label{Model1}
\begin{aligned}
   A(x)\partial_t u(x,t)&= \partial_x (B(x)(\partial_xu(x,t)-u(x,t)\partial_x \phi (x,t))  ),  \quad   x \in \Omega=[0,\ 1],  \quad  t>0,\\
  u(x,0)&=u^{in}(x), \quad x \in \Omega,  \\
\end{aligned}
\end{equation}
where $A(x)>0, B(x)>0$  are given functions, and $\phi(x,t)$ is either known or can be obtained from solving another coupled equation.  For this model problem, we consider two types of boundary conditions: \\ 
(i) the Dirichlet boundary condition
\begin{equation}\label{Di2}
    u(0,t)=u_l, \quad u(1,t)=u_r, \quad    t > 0,
\end{equation}
and (ii) the zero flux boundary condition
\begin{equation}\label{Zf2}
\partial_xu(x,t)-u(x,t)\partial_x \phi (x,t) =0, \quad x=0,1, \quad t>0.
\end{equation} 

\subsection{Scheme formulation} 
Let $N$ be an integer, and the domain $\Omega=[0, \ 1]$ be partitioned into computational cells  $I_j=[x_{j-1/2}, \ \ x_{j+1/2}]$ with cell center $x_{j}=x_{j-1/2}+\frac{1}{2}h$, for $j\in \{1,2, \cdots, N\},$   $x_{1/2}=0$ and $x_{N+1/2}=1.$ For simplicity, uniform mesh size $h=\frac{1}{N}$ is adopted. Discretize $t$ uniformly as $t_n=\tau n$, where $\tau$ is time step.

From the reformulation
\begin{equation}\label{Model2}
   A(x)\partial_t u(x,t)= \partial_x (B(x)e^{\phi(x,t)}\partial_x( u(x,t)e^{-\phi(x,t)}))
\end{equation}
of (\ref{Model1}), we consider a semi-implicit time discretization as follows: 
\begin{equation}\label{time1}
A(x)\frac{u^{n+1}(x)-u^{n}(x)}{\tau}=\partial_x \left(    B(x)e^{\phi^n(x)} \partial_x (u^{n+1}(x) e^{-\phi^n(x)}) \right),
\end{equation}
where $u^{n}(x)\approx u(x,t_n),$ $\phi^n(x)\approx \phi(x,t_n).$ 
 Let $u^n_j\approx \frac{1}{h}\int_{I_j}u^n(x)dx$, and $ A_{j}=\frac{1}{h}\int_{I_j} A(x)dx$,
 then a fully-discrete scheme of (\ref{time1}) can be given by 
 \begin{equation}\label{fully}
A_j\frac{u^{n+1}_{j}-u^n_{j}}{\tau}=\frac{U_{j+1/2}-U_{j-1/2}}{h},
\end{equation}
where the flux on interior interfaces are defined by
\begin{equation}\label{UU}
U_{j+1/2}=B_{j+1/2} e^{\phi^n_{j+1/2}}\frac{u^{n+1}_{j+1}e^{-\phi^n_{j+1}}-   u^{n+1}_{j}e^{-\phi^n_{j}}   }{h}, \quad j=1,2,\cdots, N-1.
\end{equation}
Here $B_{j+1/2}=B(x_{j+1/2})$; For $\phi^n_{j+1/2}$ we either use $\phi(x_{j+1/2}, t_n)$ if $\phi(x, t)$ is given, or 
$$
\phi^n_{j+1/2}=\frac{\phi_j^n+ \phi^n_{j+1}}{2},
$$
where  $\phi^n_{j}$ is a numerical approximation of $\phi(x_j, t_n)$. 

The boundary fluxes are given as follows: \\
(i) for the Dirichlet boundary condition (\ref{Di2})
 \begin{equation}\label{U11}
\begin{aligned}
 U_{1/2}&=B_{1/2}e^{\phi^n_{1/2}}\frac{2(u^{n+1}_{1}e^{-\phi^n_1}-u_{l}e^{-\phi^n_{1/2}})}{h}, \\
 U_{N+1/2}&=B_{N+1/2}e^{\phi^n_{N+1/2}}\frac{2(u_{r}e^{-\phi^n_{N+1/2}}-u^{n+1}_{N}e^{-\phi^n_{N}})}{h};
 \end{aligned}
\end{equation}
(ii) for the zero flux boundary condition (\ref{Zf2}), 
\begin{equation}\label{Zf3}
U_{1/2}=U_{N+1/2}=0.
\end{equation}
In either case, the initial data are determined by
$$
u_{j}^0=\frac{1}{h}\int_{I_j}u^{in}(x)dx, \quad j=1,\cdots, N.
$$
Before turning to the analysis of solution properties, we comment on these boundary fluxes.
\begin{rem} The factor 2 in the boundary flux (\ref{U11}) suffices to ensure the first order accuracy in the approximation of 
$$
B(x)e^{\phi(x,t)}\partial_x( u(x,t)e^{-\phi(x,t)})
$$
at the boundary;  see \cite{Eymard}.  However, the following flux without the factor 2, i.e. 
 \begin{equation}\label{U0}
\begin{aligned}
 U_{1/2}&=B_{1/2}e^{\phi^n_{1/2}}\frac{(u^{n+1}_{1}e^{-\phi^n_1}-u_{l}e^{-\phi^n_{1/2}})}{h}, \\
 U_{N+1/2}&=B_{N+1/2}e^{\phi^n_{N+1/2}}\frac{(u_{r}e^{-\phi^n_{N+1/2}}-u^{n+1}_{N}e^{-\phi^n_{N}})}{h},
 \end{aligned}
\end{equation}
can produce only a zeroth order approximation at the boundary.  Order loss of accuracy has been observed in our numerical tests
when (\ref{U0}) is used.   

An alternative  boundary flux for (i) is a second order approximation of the form 
 \begin{equation}\label{U22}
\begin{aligned}
U_{1/2}&=B_{1/2}e^{\phi^n_{1/2}}\frac{-\frac{1}{3}u^{n+1}_{2}e^{-\phi^n_{2}}+3u^{n+1}_{1}e^{-\phi^n_{1}}-\frac{8}{3}u_le^{-\phi^n_{1/2}}}{h}, \\
U_{N+1/2}&=B_{N+1/2}e^{\phi^n_{N+1/2}}\frac{\frac{1}{3}u^{n+1}_{N-1}e^{-\phi^n_{N-1}}-3u^{n+1}_{N}e^{-\phi^n_{N}}+\frac{8}{3}u_re^{-\phi^n_{N+1/2}}}{h}.
 \end{aligned}
\end{equation} 
However, it is known that the first order boundary flux  does not destroy the second order accuracy of the scheme, we refer to \cite{LW18} for a such result  regarding the Shortley-Weller method. Hence throughout the paper, we will not discuss high order boundary fluxes such as (\ref{U22}). 
\end{rem} 

\subsection{Positivity} 
It turns out that both schemes,  (\ref{fully})-(\ref{UU})-(\ref{U11}) and  (\ref{fully})-(\ref{UU})-(\ref{Zf3}),  preserve positivity of numerical solutions 
without any time step restriction.
\begin{thm} Scheme (\ref{fully})-(\ref{UU}) with either 
(i) (\ref{U11}) and $u_l\geq 0, u_r\geq 0$,  or 
(ii) (\ref{Zf3}),  
is positivity-preserving, in the sense that 
if $u^n_{j}\geq 0$ for all $j=1,\cdots, N$,    
then 
$$
u^{n+1}_{j}\geq 0 \; \text{for all}\;  j=1,\cdots, N.
$$
\end{thm}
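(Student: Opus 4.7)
The plan is to recast the scheme at time level $n+1$ as a tridiagonal linear system in transformed unknowns, and then show that the system matrix is an M-matrix with a componentwise non-negative right-hand side; positivity of $u^{n+1}_j$ will then follow because M-matrices have non-negative inverses. The reformulation already points to the right substitution: I would introduce $v_j := u^{n+1}_j e^{-\phi^n_j}$, which puts the interior flux (\ref{UU}) into the symmetric form
$$U_{j+1/2} \;=\; \frac{\beta_{j+1/2}}{h}\,(v_{j+1}-v_j),\qquad \beta_{j+1/2} := B_{j+1/2}\, e^{\phi^n_{j+1/2}} > 0.$$

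Substituting this into (\ref{fully}) and collecting terms gives a linear system $M\mathbf{v} = \mathbf{b}$. I would then verify three structural features of $M$ and $\mathbf{b}$: (a) each off-diagonal entry of $M$ is of the form $-\beta_{j\pm 1/2}/h^2 \leq 0$; (b) each diagonal entry contains the time-derivative contribution $A_j e^{\phi^n_j}/\tau > 0$ plus the non-negative diffusion contribution from the adjacent $\beta$'s; and (c) the interior component of $\mathbf{b}$ equals $(A_j/\tau)\, u^n_j \geq 0$ by the induction hypothesis. These three facts immediately make $M$ a strictly diagonally dominant $Z$-matrix, hence a non-singular M-matrix with $M^{-1}\geq 0$, and they do so \emph{without any constraint relating $\tau$ and $h$}, which is the source of the unconditional positivity.

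I would then handle the two boundary regimes separately, since this is where the bookkeeping matters most. For the zero-flux condition (\ref{Zf3}), the fluxes $U_{1/2}$ and $U_{N+1/2}$ simply vanish, so the boundary rows of $M$ just lose one off-diagonal term while keeping the crucial $A_j e^{\phi^n_j}/\tau$ on the diagonal; the M-matrix property and non-negativity of $\mathbf{b}$ survive untouched. For the Dirichlet condition (\ref{U11}), the factor $2$ in the boundary flux contributes an extra $2\beta_{1/2}/h^2$ to the $(1,1)$ entry of $M$ and an additional $(2\beta_{1/2}/h^2)\, u_l\, e^{-\phi^n_{1/2}}$ to $b_1$; because $u_l \geq 0$ by hypothesis, $b_1$ remains non-negative, and the $Z$-matrix structure, strict diagonal dominance, and positive diagonal are all preserved. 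The symmetric treatment handles the row $j = N$ using $u_r \geq 0$. Applying $M^{-1}\geq 0$ to $\mathbf{b}\geq 0$ yields $\mathbf{v}\geq 0$, and multiplying back by $e^{\phi^n_j} > 0$ gives $u^{n+1}_j \geq 0$.

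I do not foresee a serious obstacle here; the only step requiring care is verifying that the Dirichlet boundary stencil in (\ref{U11}) (with the factor $2$) does not break the M-matrix property or inject a negative contribution into $\mathbf{b}$, which is a short direct calculation. Conceptually, the reason the argument is clean is that the reformulation (\ref{time1}) is a Fokker--Planck-type diffusion with strictly positive coefficient $B(x) e^{\phi^n(x)}$ acting on the transformed density $v = u\, e^{-\phi^n}$, so the semi-implicit discretization automatically inherits a discrete maximum-principle structure regardless of the time-step size.
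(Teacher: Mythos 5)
Your proposal is correct and follows essentially the same route as the paper: the same substitution $v_j=u^{n+1}_j e^{-\phi^n_j}$ (the paper's $G_j$) yields the same tridiagonal system with non-positive off-diagonal entries, a strictly dominant positive diagonal, and a non-negative right-hand side, all independent of the ratio $\tau/h^2$, for both boundary treatments. The only difference is in the final step, where you invoke inverse-positivity of M-matrices while the paper's proof reaches the same conclusion by a self-contained discrete minimum-principle argument at the index where $\min_{j}G_j$ is attained; the paper's introduction in fact states the M-matrix phrasing you use.
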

\begin{proof} 
Set mesh ratio $\lambda=\frac{\tau}{h^2}$ and introduce $G_{j}=u^{n+1}_j e^{-\phi^n_{j}}$, so that \\
(i) scheme  
(\ref{fully}), (\ref{UU}) and (\ref{U11}) can be rewritten as 
\begin{equation}\label{G}
\begin{aligned}
&(A_1e^{\phi^n_1}+\lambda B_{3/2}e^{\phi^n_{3/2}}+2\lambda B_{1/2}e^{\phi^n_{1/2}})G_1-\lambda B_{3/2}e^{\phi^n_{3/2}}G_2=A_1u^n_1+2\lambda B_{1/2} u_l,\\
&-\lambda B_{j-1/2}e^{\phi^n_{j-1/2}}G_{j-1}+(A_je^{\phi^n_j}+\lambda B_{j+1/2}e^{\phi^n_{j+1/2}}+\lambda B_{j-1/2}e^{\phi^n_{j-1/2}})G_j-\lambda B_{j+1/2}e^{\phi^n_{j+1/2}}G_{j+1}=A_ju^n_j,\\
&-\lambda B_{N-1/2}e^{\phi^n_{N-1/2}}G_{N-1}+(A_N e^{\phi^n_N}+\lambda B_{N-1/2}e^{\phi^n_{N-1/2}}+2\lambda B_{N+1/2}e^{\phi^n_{N+1/2}})G_N=a_Nu^n_N+2\lambda B_{N+1/2} u_r.
\end{aligned}
\end{equation}
This linear system of $\{G_j\}$ admits a unique solution since its coefficient matrix  is strictly diagonally dominant.
Since $u^{n+1}_{j}=e^{\phi_j^n} G_j \geq e^{\phi_j^n} G_k$, where 
$$
G_{k}=\min_{1\leq j\leq N} \{ G_{j}\},
$$
it suffices to prove $G_k\geq0$.  We discuss in cases:   if $1<k<N,$ then from the $k$-th equation of (\ref{G}) with $A_k>0$ it follows 
\begin{align*}
A_ku^n_k  \leq  & -\lambda B_{k-1/2}e^{\phi^n_{k-1/2}}G_{k}+(A_ke^{\phi^n_k} +\lambda B_{k+1/2}e^{\phi^n_{k+1/2}}+\lambda B_{k-1/2}e^{\phi^n_{k-1/2}})G_k\\
&-\lambda B_{k+1/2}e^{\phi^n_{k+1/2}}G_{k}=A_ke^{\phi^n_k}G_k.
\end{align*}
Hence $G_{k} \geq u^n_k e^{- \phi^n_k} \geq 0$; if $k=1,$ from the first equation of (\ref{G}) we have
\begin{align*}
A_1u^n_1+2\lambda B_{1/2}u_l \leq & (A_1e^{\phi^n_1}+\lambda B_{3/2}e^{\phi^n_{3/2}}+2\lambda B_{1/2}e^{\phi^n_{1/2}})G_1-\lambda B_{3/2}e^{\phi^n_{3/2}}G_1= (A_1e^{\phi_1}+2\lambda B_{1/2}e^{\phi_l})G_1.
\end{align*}
This implies $G_1\geq 0$; so does the case if $k=N$.  

(ii) Likewise, scheme  (\ref{fully}), (\ref{UU}) and (\ref{Zf3}) can be rewritten as 
 \begin{equation*}
\begin{aligned}
&(A_1e^{\phi^n_1}+\lambda B_{3/2}e^{\phi^n_{3/2}})G_1-\lambda B_{3/2}e^{\phi^n_{3/2}}G_2=A_1u^n_1,\\
&-\lambda B_{j-1/2}e^{\phi^n_{j-1/2}}G_{j-1}+(A_je^{\phi^n_j}+\lambda B_{j+1/2}e^{\phi^n_{j+1/2}}+\lambda B_{j-1/2}e^{\phi^n_{j-1/2}})G_j-\lambda B_{j+1/2}e^{\phi^n_{j+1/2}}G_{j+1}=A_ju^n_j,\\
&-\lambda B_{N-1/2}e^{\phi^n_{N-1/2}}G_{N-1}+(A_N e^{\phi^n_N}+\lambda B_{N-1/2}e^{\phi^n_{N-1/2}})G_N=A_Nu^n_N.
\end{aligned}
\end{equation*}
Using an entirely same argument, we can show $G_j \geq 0$, hence $u_j^{n+1}\geq0$ for all $j$ involved. 
\end{proof}

\begin{rem} The specific values or choices of $\{\phi^n_j\}$ and $\{\phi^n_{j+1/2}\}$ do not affect the unconditional positivity property of the scheme for 
 $\{u^n_j\}.$  This result thus can be applied to the case when $\phi(x, t)$ is solved by the Poisson equation, see the next section.
\end{rem}

\section{Positive schemes for the reduced PNP-system }
The reduced PNP system (\ref{PNP2}) is reformulated as
\begin{equation}\label{PNP2+}
 \begin{aligned}
  A(x)  \partial_t c_i &= \partial_x (A(x)D_ie^{-z_i\psi} \partial_x (c_ie^{z_i\psi})),\\
   -\partial_x (\epsilon A(x) \partial_x \psi) & =  A(x) \left(\sum_{i=1}^{m}z_ic_i-\rho(x)\right).
\end{aligned}
\end{equation}
Let $c^n_{i,j}$ and $\psi^n_j$ approximate the cell average $\frac{1}{h}\int_{I_j}c_i(x,t_n)dx$ and $\frac{1}{h}\int_{I_j}\psi(x,t_n)$ respectively, then from the discretization strategy in section 2 the fully discrete scheme for system (\ref{PNP2+}) follows
\begin{align}
\label{fullyCi}
& A_j\frac{c^{n+1}_{i,j}-c^n_{i,j}}{\tau} =\frac{C_{i,j+1/2}-C_{i,j-1/2}}{h}, \\
\label{fullyPs}
& - \frac{\Psi^n_{j+1/2}-\Psi^n_{j-1/2}}{h}=A_{j}\bigg(\sum_{i=1}^{m}z_ic^n_{i,j}-\rho_j\bigg),
\end{align}
where numerical fluxes on interior interfaces are defined by 
\begin{align}
\label{CC}
C_{i,j+1/2} &=A_{j+1/2}D_i e^{-z_i\psi_{j+1/2}^n} \frac{\left( c^{n+1}_{i,j+1}e^{z_i\psi_{j+1}^n} -c^{n+1}_{i,j}e^{z_i\psi_{j}^n}\right)}{h}, \; j=1, \cdots, N-1.\\
\label{Ps2}
\Psi^n_{j+1/2}&=\epsilon A_{j+1/2}\frac{\psi^n_{j+1}-\psi^n_j}{h}, \quad j=1,\cdots, N-1,
\end{align}
where relevant terms are determined by 
\begin{align*}
& A_j=\frac{1}{h}\int_{I_j}A(x)dx,  \quad \rho_j  =\frac{1}{h}\int_{I_j}\rho(x)dx \\
& A_{j+1/2}=A(x_{j+1/2}), \; \psi^n_{j+1/2} =(\psi^n_{j}+\psi^n_{j+1})/2.
\end{align*}
For non-trivial $A(x), \rho(x)$, numerical integration of high accuracy is used to evaluate $A_j$ and $\rho_j$.  
The boundary fluxes are defined as follows: 

(i) for Dirichlet boundary condition (\ref{Di}), 
 \begin{equation}\label{BC11}
\begin{aligned}
 C_{i,1/2}&=A_{1/2}D_i\frac{2(c^{n+1}_{i,1}e^{z_i\psi^n_1}-c_{i,l})}{h}, \\
 C_{i,N+1/2}&=A_{N+1/2}D_ie^{-z_iV}\frac{2(c_{i,r}e^{z_iV}-c^{n+1}_{i,N}e^{z_i\psi^n_{N}})}{h},\\
 \Psi^n_{1/2}&=\epsilon A_{1/2}\frac{2\psi^n_{1}}{h}, \\
 \Psi^n_{N+1/2}&=\epsilon A_{N+1/2}\frac{2(V-\psi^n_{N})}{h},
 \end{aligned}
\end{equation}

(ii) for boundary condition (\ref{Zf}): \\
\begin{equation}\label{ZfBC}
\begin{aligned}
& C_{i,1/2}=0,\quad C_{i,N+1/2}=0,\\
& \Psi^n_{1/2}=\frac{\epsilon}{\eta} A_{1/2}(\psi^n_{1}-\psi_{_-}),\quad \Psi^n_{N+1/2}=\frac{\epsilon}{\eta} A_{N+1/2}(\psi_{_+}-\psi^n_{N}).
\end{aligned}
\end{equation}
\subsection{Scheme properties}
Scheme (\ref{fullyCi})-(\ref{Ps2}) with (\ref{BC11}) turns out to be unconditionally positivity-preserving. 
\begin{thm} Let  $\psi^n_j$ and $c_{i,j}^{n+1}$ for  $i=1,\cdots, m,$ $j=1, \cdots, N$ be obtained from (\ref{fullyCi})-(\ref{Ps2}) with (\ref{BC11}). If $c^n_{i,j}\geq 0$ and $c_{i,l}\geq 0,$ $c_{i,r}\geq 0$ for $i=1,\cdots, m$, $j=1, \cdots, N$,  then $c^{n+1}_{i,j}\geq 0$ for all $i=1,\cdots, m,$ $j=1, \cdots, N$.
\end{thm}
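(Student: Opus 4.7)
The plan is to reduce the theorem directly to Theorem 2.1 by decoupling the system species by species. Fix a time level $n$ and a species index $i$. Because $\psi^n=\{\psi^n_j\}_{j=1}^N$ is determined from the values $\{c^n_{i,j}\}$ alone via the Poisson solve (\ref{fullyPs})--(\ref{Ps2}) together with the boundary discretization in (\ref{BC11}), it can be treated as a completely known field when solving for $\{c^{n+1}_{i,j}\}$. Under that viewpoint, the transport update (\ref{fullyCi})--(\ref{CC}) together with the first two lines of (\ref{BC11}) is exactly the linear tridiagonal system analyzed in Theorem 2.1, under the identification $u \leftrightarrow c_i$, $B(x) \leftrightarrow A(x) D_i$, and $\phi(x,t) \leftrightarrow -z_i \psi(x,t)$.

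The one concrete thing to verify is that the Dirichlet boundary flux in (\ref{BC11}) matches (\ref{U11}) under this identification. Using $\psi(0,t)=0$ and $\psi(1,t)=V$ from (\ref{Di}), one has $e^{\phi^n_{1/2}}=1$ and $e^{\phi^n_{N+1/2}}=e^{-z_i V}$, so that
\begin{equation*}
B_{1/2}e^{\phi^n_{1/2}}\bigl(u^{n+1}_{1}e^{-\phi^n_1}-u_l e^{-\phi^n_{1/2}}\bigr) = A_{1/2}D_i\bigl(c^{n+1}_{i,1}e^{z_i\psi^n_1}-c_{i,l}\bigr),
\end{equation*}
and similarly at $x_{N+1/2}$ with $u_r e^{-\phi^n_{N+1/2}}=c_{i,r}e^{z_i V}$. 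So the coefficient matrix and the right-hand side of the linear system for $G_j=c^{n+1}_{i,j}e^{z_i\psi^n_j}$ are identical, after relabeling, to the ones appearing in (\ref{G}); in particular the matrix is strictly diagonally dominant with positive diagonal and nonpositive off-diagonal entries, hence an M-matrix.

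Once this structural identification is in place, Theorem 2.1(i) applies verbatim: under the hypotheses $c^n_{i,j}\ge 0$ and $c_{i,l},c_{i,r}\ge 0$, the right-hand side of the tridiagonal system is nonnegative, the minimum principle argument used in the proof of Theorem 2.1 gives $G_j\ge 0$ for all $j$, and therefore $c^{n+1}_{i,j}=e^{-z_i\psi^n_j}G_j\ge 0$. Since the argument treats each $i$ separately and never uses any relation between different species, the same conclusion holds for every $i=1,\ldots,m$, which is what we wanted.

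There is no real obstacle in this proof; the essential work has already been done in Section 2. The only point that requires care is invoking Remark 2.1 to justify that the numerical values of $\{\psi^n_j\}$ produced by the coupled Poisson solver, whatever they may be, cannot disturb the M-matrix structure of the update for $c_i^{n+1}$, so that the positivity of $\{c^{n+1}_{i,j}\}$ is genuinely unconditional with respect to both the time step $\tau$ and the discretization used for $\psi$.
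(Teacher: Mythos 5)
Your proof is correct and follows essentially the same route as the paper's: both reduce the statement, species by species, to the positivity theorem for the model equation in Section 2 via the identification $u\leftrightarrow c_i$, $B_{j+1/2}\leftrightarrow A_{j+1/2}D_i$, $\phi^n_j\leftrightarrow -z_i\psi^n_j$, treating the field $\{\psi^n_j\}$ as known data whose values do not affect the M-matrix structure. Your explicit verification that the boundary fluxes (\ref{BC11}) coincide with (\ref{U11}) under this identification (using $\psi=0$ at $x=0$ and $\psi=V$ at $x=1$) is a detail the paper leaves implicit, but otherwise the arguments are the same.
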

\begin{proof}
For fixed $i=1,\cdots,m$, the scheme (\ref{fullyCi}), (\ref{CC}) and (\ref{BC11}) is of the same form as (\ref{fully}), (\ref{UU}) and (\ref{U11}) with  $u^n_j=c^n_{i,j},$ $B_{j+1/2}=A_{j+1/2}D_i$, $\phi^n_j=-z_i\psi^n_j$ and $\phi^n_{j+1/2}=-z_i\psi^n_{j+1/2}$.  
 From (i) in Theorem 2.2, we can conclude $c^{n+1}_{i,j}=u^{n+1}_j \geq 0$.
\end{proof}
\begin{rem} From the above analysis we see that 
positivity of $c_{i,j}^n$ remains true even when another Poisson solver is used. 
\end{rem}

For scheme (\ref{fullyCi})-(\ref{Ps2}) with (\ref{ZfBC}), it turns out that the solution $c^n_{i,j}$ is conservative, non-negative, and energy dissipating.  
In order to state the energy dissipation result, we define a discrete version of the free energy (\ref{energy1}) as 
\begin{equation}\label{fullyenergy}
E_h^n=\sum_{j=1}^NhA_j\bigg(\sum_{i=1}^mc^n_{i,j}\log c^n_{i,j}+\frac{1}{2}S_j^n \psi^n_j \bigg)+\frac{\epsilon}{2\eta }(\psi_{_+}A_{N+1/2}\psi^n_{N}+\psi_{_-}A_{1/2}\psi^n_{1}),
\end{equation}
where 
$$
S_j^n=\sum_{i=1}^m z_ic^n_{i,j}-\rho_j.
$$
\begin{thm} Let $\psi^n_j$ and $c^{n}_{i,j}$ be obtained from (\ref{fullyCi})-(\ref{Ps2}) and (\ref{ZfBC}), then we have:\\
(1) Conservation of mass:
\begin{equation}\label{mass2}
\sum_{j=1}^NhA_jc_{i,j}^{n+1}= \sum_{j=1}^Nh A_jc_{i,j}^n \ \ \text{ for } n\geq 0, i=1,\cdots,m;
\end{equation}
(2) Propagation of positivity: if $c_{i,j}^n\geq0$ for all $j=1,\cdots, N,$ and $i=1,\cdots,m,$ then
$$c_{i,j}^{n+1}\geq 0, \quad j=1,\cdots,N, i=1,\cdots,m;
$$
(3) Energy dissipation: there exists $C^*>0$ depending on numerical solutions but independent on $\tau$ and $h$, 
such that if $\tau\leq C^*\epsilon/\eta$, then 
\begin{equation}\label{FE}
E_{h}^{n+1}-E_{h}^n\leq-\frac{\tau}{2}I_h^n,
\end{equation}
where
$$
I_h^n=\sum_{i=1}^m \sum_{j=1}^{N-1}\frac{1}{h}A_{j+1/2}D_i (c_{i,j+1}^{n+1}e^{z_i\psi_{j+1}^n}-c_{i,j}^{n+1}e^{z_i\psi_j^n})(  \log c_{i,j+1}^{n+1}e^{z_i\psi_{j+1}^n}-\log c_{i,j}^{n+1}e^{z_i\psi_j^n})\geq 0.
$$
\end{thm}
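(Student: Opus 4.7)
Part (1) follows by multiplying (\ref{fullyCi}) by $h$, summing over $j=1,\ldots,N$, and using $C_{i,1/2}=C_{i,N+1/2}=0$ from (\ref{ZfBC}) to kill the telescoped right-hand side. Part (2) is a direct application of Theorem~2.2(ii) under the substitutions $u^n_j = c^n_{i,j}$, $B_{j+1/2}=A_{j+1/2}D_i$, $\phi^n_j = -z_i\psi^n_j$; by Remark~2.2 the fact that $\psi^n_j$ is supplied by the coupled Poisson solve rather than given data is irrelevant.

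For (3), I would split $E_h^n = E_{h,1}^n + E_{h,2}^n$ with entropy part $E_{h,1}^n = h\sum_{i,j}A_j c^n_{i,j}\log c^n_{i,j}$. Testing (\ref{fullyPs})--(\ref{Ps2}) against $\psi^n$, using the Robin data (\ref{ZfBC}) and a discrete summation by parts, recasts the electrostatic part as the positive quadratic form
\[
E_{h,2}^n = \frac{\epsilon}{2h}\sum_{j=1}^{N-1}A_{j+1/2}(\psi^n_{j+1}-\psi^n_j)^2 + \frac{\epsilon}{2\eta}\bigl(A_{1/2}(\psi^n_1)^2 + A_{N+1/2}(\psi^n_N)^2\bigr).
\]
Using $a^2-b^2 = 2b(a-b)+(a-b)^2$ term-by-term and applying Poisson in reverse to identify the linear-in-$\delta\psi$ piece (with $\delta\psi_j:=\psi^{n+1}_j-\psi^n_j$) then gives
\[
E_{h,2}^{n+1}-E_{h,2}^n = h\sum_{i,j}A_j z_i(c^{n+1}_{i,j}-c^n_{i,j})\psi^n_j + R,
\]
where $R = \frac{\epsilon}{2h}\sum_{j=1}^{N-1} A_{j+1/2}(\delta\psi_{j+1}-\delta\psi_j)^2 + \frac{\epsilon}{2\eta}\bigl(A_{1/2}(\delta\psi_1)^2+A_{N+1/2}(\delta\psi_N)^2\bigr) \geq 0$.

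For $E_{h,1}$, convexity of $x\log x$ combined with (1) gives $E_{h,1}^{n+1}-E_{h,1}^n \leq h\sum_{i,j}A_j\log c^{n+1}_{i,j}\,(c^{n+1}_{i,j}-c^n_{i,j})$. Adding the two bounds, substituting $hA_j(c^{n+1}_{i,j}-c^n_{i,j}) = \tau(C_{i,j+1/2}-C_{i,j-1/2})$, summing by parts with vanishing boundary fluxes, and inserting (\ref{CC}) yields
\[
E_h^{n+1} - E_h^n \leq -\tau \widetilde I_h^n + R,\quad \widetilde I_h^n := \sum_{i,j}\frac{A_{j+1/2}D_i\,e^{-z_i\psi^n_{j+1/2}}}{h}(a-b)(\log a-\log b),
\]
with $a=c^{n+1}_{i,j+1}e^{z_i\psi^n_{j+1}}$, $b=c^{n+1}_{i,j}e^{z_i\psi^n_j}$; each summand is $\geq 0$ by $(a-b)(\log a-\log b)\geq 0$, and $\widetilde I_h^n$ differs from $I_h^n$ only by the bounded pointwise factor $e^{-z_i\psi^n_{j+1/2}}$.

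The main obstacle is absorbing $R$ into a fraction of $\tau\widetilde I_h^n$. I would use the dual representation $2R = h\sum_j A_j(S^{n+1}_j-S^n_j)\delta\psi_j$ (obtained by testing Poisson against $\delta\psi$), substitute the scheme again for $c^{n+1}_{i,j}-c^n_{i,j}$, summation-by-parts, and apply Cauchy--Schwarz together with the log-mean inequality $(a-b)^2\leq \max(a,b)(a-b)(\log a-\log b)$. This produces an estimate of the form $R \leq (\tau^2 K_n \eta/\epsilon)\,\widetilde I_h^n$, where $K_n$ is controlled by $L^\infty$ bounds on $c^{n+1}_{i,j}$ and $\psi^n_j$ (finite since the scheme is positive and conservative) but independent of $h,\tau$. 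Taking $C^*$ small enough that $\tau\leq C^*\epsilon/\eta$ forces the prefactor to be $\leq 1/2$, giving $R\leq \tfrac{\tau}{2}\widetilde I_h^n$ and hence $E_h^{n+1}-E_h^n\leq -\tfrac{\tau}{2}\widetilde I_h^n \leq -\tfrac{\tau}{2}I_h^n$ after absorbing $\min_{i,j}e^{-z_i\psi^n_{j+1/2}}$ into a final adjustment of $C^*$. The delicate point is establishing the mesh-independence of $K_n$ in the Cauchy--Schwarz step, and balancing the boundary portion of $R$ against the Robin coefficient $\epsilon/\eta$ — this is precisely where the time-step condition acquires its stated form.
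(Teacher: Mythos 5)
Your parts (1) and (2) coincide with the paper's proof. For part (3) your overall strategy matches the paper's — bound the entropy increment by convexity, pair the remaining $(c^{n+1}_{i,j}-c^n_{i,j})(\log c^{n+1}_{i,j}+z_i\psi^n_j)$ term with the flux via summation by parts to produce $-\tau$ times the dissipation, and isolate a nonnegative quadratic remainder to be absorbed under the time-step restriction — but the two technical pivots are executed differently. For the electrostatic increment, you identify $E_{h,2}^n$ with the Dirichlet form $\frac{\epsilon}{2h}\psi^n\cdot M\psi^n$ and expand the square, which cleanly yields the cross term $h\sum_jA_j(S_j^{n+1}-S_j^n)\psi^n_j$ plus $R=Q(\delta\psi)\ge 0$; the paper expands $III$ directly, asserts a leftover functional $\mathbf{F}$ vanishes by ``tedious but elementary calculations,'' and recovers the same $R=\frac12 h\sum_jA_j(S^{n+1}_j-S^n_j)(\psi^{n+1}_j-\psi^n_j)$ — your route is the more transparent derivation of the identical quantity. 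For the absorption, the paper bounds $R$ through the spectral estimate $\zeta\cdot M^{-1}\zeta\le N^2\eta\|\zeta\|^2/(A_{1/2}+A_{N+1/2})$ and then compares with the dissipation via a ratio $c_0=-\vec{\xi}\cdot\vec{\mu}/\|\vec{\xi}\|^2$ assumed bounded below; you instead substitute the transport scheme into $2R$, apply weighted Cauchy--Schwarz against $Q(\delta\psi)$, and invoke the logarithmic-mean inequality, landing on a constant $K_n$ controlled by $L^\infty$ bounds of the numerical solution. Both absorption arguments rest on an unproven solution-dependent constant, which the theorem statement explicitly tolerates, so your version is at the same level of rigor; if anything it makes the dependence more explicit, and it in fact yields a condition of order $\tau\le C\epsilon/K_n$ with no $\eta$, since the Robin portion of $R$ can simply be kept on the good side rather than estimated. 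One caveat: you correctly retain the factor $e^{-z_i\psi^n_{j+1/2}}$ from the flux (\ref{CC}) in $\widetilde I_h^n$, which the paper silently drops when writing $I=-\tau I_h^n$; your closing remark about recovering the stated $I_h^n$ ``by a final adjustment of $C^*$'' does not literally work when $\min_{i,j}e^{-z_i\psi^n_{j+1/2}}<1/2$, but the honest conclusion $E_h^{n+1}-E_h^n\le-\frac{\tau}{2}\widetilde I_h^n$ is the correct form of the dissipation inequality and exposes a small inconsistency in the theorem as stated rather than a gap in your argument.
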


\begin{proof} (1) Mass conservation follows from summing (\ref{fullyCi}) over $j=1,\cdots, N$ and using (\ref{ZfBC}).

(2) For each fixed $i=1, \cdots m$, this follows from (ii) in Theorem 2.2,  by taking $u^n_j=c^n_{i,j}$, $B_{j+1/2}=A_{j+1/2}D_i$, $\phi^n_j=-z_i\psi^n_j$ and $\phi^n_{j+1/2}=-z_i\psi^n_{j+1/2}.$

(3) Using (\ref{fullyenergy}) we find that
\begin{align*}
E^{n+1}_h-E^n_h 
=&\sum_{j=1}^NhA_j\bigg(  \sum_{i=1}^m(c_{i,j}^{n+1}-c_{i,j}^n)(\log c_{i,j}^{n+1}+z_i\psi_j^n)+   \sum_{i=1}^m c_{i,j}^n\log c_{i,j}^{n+1} -\sum_{i=1}^m c_{i,j}^n\log c_{i,j}^n  \\
&+ \frac{1}{2}S_j^{n+1} \psi_j^{n+1}-\frac{1}{2}S_j^n \psi_j^n-\sum_{i=1}^m z_i(c_{i,j}^{n+1}-c_{i,j}^n)\psi_j^n\bigg)\\
& +\frac{\epsilon}{2\eta }(A_{N+1/2}\psi_{_+}\psi_N^{n+1}+A_{1/2}\psi_{_-}\psi_1^{n+1})-\frac{\epsilon}{2\eta}(A_{N+1/2}\psi_{_+}\psi_N^{n}+A_{1/2}\psi_{_-}\psi_1^{n})\\
=: &I+II+III. 
\end{align*}
We proceed to estimate term by term. 
For $I$, we use scheme (\ref{fullyCi})-(\ref{CC}) and (\ref{ZfBC}) and summation by parts to obtain
\begin{align*}
I=& \sum_{j=1}^NhA_j  \sum_{i=1}^m(c_{i,j}^{n+1}-c_{i,j}^n)(\log c_{i,j}^{n+1}+z_i\psi_j^n)\\
=& \tau  \sum_{i=1}^m \sum_{j=1}^N (C_{i,j+1/2}-C_{i,j-1/2}) \log (c_{i,j}^{n+1}e^{z_i\psi_j^n})\\ 
=& -\tau  \sum_{i=1}^m \sum_{j=1}^{N-1} C_{i,j+1/2}(  \log c_{i,j+1}^{n+1}e^{z_i\psi_{j+1}^n}-\log c_{i,j}^{n+1}e^{z_i\psi_j^n})\\ 
=& -\tau  \sum_{i=1}^m \sum_{j=1}^{N-1}\frac{1}{h}A_{j+1/2}D_i (c_{i,j+1}^{n+1}e^{z_i\psi_{j+1}^n}-c_{i,j}^{n+1}e^{z_i\psi_j^n})(  \log c_{i,j+1}^{n+1}e^{z_i\psi_{j+1}^n}-\log c_{i,j}^{n+1}e^{z_i\psi_j^n})\\
=& -\tau I_h^n\leq0.
\end{align*}
For $II$,  we use $\log(X)\leq X-1$ for $X>0,$ to obtain   
\begin{align*}
II=&\sum_{j=1}^{N}hA_{j} ( \sum_{i=1}^m c_{i,j}^n\log c_{i,j}^{n+1} -\sum_{i=1}^m c_{i,j}^n\log c_{i,j}^n   )\\
=& \sum_{j=1}^{N}hA_{j}  \sum_{i=1}^m c_{i,j}^n\log \frac{c_{i,j}^{n+1}}{c_{i,j}^n} \\
\leq &  \sum_{i=1}^m  \sum_{j=1}^{N}hA_{j} c_{i,j}^n( \frac{c_{i,j}^{n+1}}{c_{i,j}^n}-1) \\
= &  \sum_{i=1}^m  \sum_{j=1}^{N}hA_{j} ( c_{i,j}^{n+1} -c_{i,j}^n) =0,
\end{align*}
where in the last equality we have used conservation of mass.

Rearranging terms in $III$, we find that
\begin{align*}
III=&\sum_{j=1}^{N}hA_{j}\bigg(  \frac{1}{2}S_j^{n+1} \psi_j^{n+1}+\frac{1}{2}S_j^n\psi_j^n-S_j^{n+1}\psi_j^n\bigg)\\
& +\frac{\epsilon}{2\eta }(A_{N+1/2}\psi_{_+}\psi_N^{n+1}+A_{1/2}\psi_{_-}\psi_1^{n+1})-\frac{\epsilon}{2\eta}(A_{N+1/2}\psi_{_+}\psi_N^{n}+A_{1/2}\psi_{_-}\psi_1^{n})\\
&=\frac{1}{2}\sum_{j=1}^{N}hA_{j}(S_j^{n+1}-S_j^n)(\psi_j^{n+1}-\psi_j^n)+\mathbf{F},
\end{align*}
where 
\begin{align*}
\mathbf{F}=&\frac{1}{2} \sum_{j=1}^{N}hA_{j}(   S_j^n \psi_j^{n+1}-S_j^{n+1}\psi_j^n)+\frac{\epsilon}{2\eta }(A_{N+1/2}\psi_{_+}\psi_N^{n+1}+A_{1/2}\psi_{_-}\psi_1^{n+1})\\
&-\frac{\epsilon}{2\eta }(A_{N+1/2}\psi_{_+}\psi_N^{n}+A_{1/2}\psi_{_-}\psi_1^{n}).
\end{align*}
\iffalse
Note that 
\begin{align*}
-\frac{1}{2}(\sum_{i=1}^m z_ic_{i,j}^{n}-\rho_j)\psi_j^{n+1}=& -\frac{1}{2}(\sum_{i=1}^m z_ic_{i,j}^{n+1}-\rho_j)\psi_j^n+\frac{\epsilon}{2}(A_{N+1/2}\psi_{_+}\psi_N^{n+1}+A_{1/2}\psi_{_-}\psi_1^{n+1})\\
&-\frac{\epsilon}{2}(A_{N+1/2}\psi_{_+}\psi_N^{n}+A_{1/2}\psi_{_-}\psi_1^{n}).
\end{align*}
\fi
Tedious but elementary calculations show that 
$\mathbf{F} \equiv 0.$ Thus 
\begin{equation}\label{C4}
III=\frac{1}{2}\sum_{j=1}^{N}hA_{j}(S_j^{n+1}-S_j^n)(\psi_j^{n+1}-\psi_j^n).
\end{equation}
Scheme (\ref{fullyPs})-(\ref{Ps2}) and (\ref{ZfBC}) can be written in matrix form 
$$
M\vec{\psi}^n=\vec{b},
$$ 
where
\begin{equation}\label{TrM}
M= \begin{bmatrix}
\frac{h}{\eta} A_{1/2}+A_{3/2} & -A_{3/2} &  &  &  \\
-A_{3/2} & A_{3/2}+A_{5/2} & -A_{5/2} &  &  \\
 &\ddots &\ddots&\ddots &  \\
 &  &  -A_{N-3/2} &  A_{N-3/2}+A_{N-1/2} &  -A_{N-1/2} \\
 &  &  &  -A_{N-1/2} & \frac{h}{\eta} A_{N+1/2}+A_{N-1/2}
\end{bmatrix},  
\end{equation}
$$
\vec{b}=\frac{h^2}{\epsilon}\bigg(A_1S_1^n +\frac{\epsilon}{h\eta }A_{1/2}\psi_{_-}, A_2S_2^n, \cdots, A_NS_N^n+\frac{\epsilon}{h \eta}A_{N+1/2}\psi_{_+} \bigg)^\top.
$$
Hence we have 
$$
\psi_j^{n+1}-\psi_j^n=\frac{\tau h^2}{\epsilon}\sum_{k=1}^N(M^{-1})_{j,k}A_k D_t S_k^n, \quad \tau D_t S_j^n: =S_j^{n+1}-S_j^n,
$$
thus (\ref{C4})  can be simplified as
$$
III=\frac{h^3\tau^2 }{2\epsilon} \sum_{j=1}^NA_jD_tS_j^n\sum_{k=1}^N(M^{-1})_{j,k}A_kD_tS_k^n.
$$
We claim that for any $\zeta \in \mathbb{R}^N$ 
\begin{align}\label{em}
\zeta \cdot M^{-1} \zeta \leq \frac{N^2 \eta}{ (A_{1/2}+A_{N+1/2})}\|\zeta\|^2,
\end{align}
with which we can bound $III$ as
\begin{equation}\label{C5}
\begin{aligned}
III=& \frac{h^3\tau^2 }{2\epsilon} \sum_{j=1}^NA_jD_tS_j^n\sum_{k=1}^N(M^{-1})_{j,k}A_kD_t S_k^n\\
\leq & \frac{\alpha \eta N^2 h^3\tau^2 }{2\epsilon} \sum_{j=1}^NA^2_j |D_t S_j^n|^2,
\end{aligned}
\end{equation}
where $\alpha^{-1}=A_{1/2}+A_{N+1/2}$.
Note that $hN=1$ and 
$$
|D_t S_j^n|^2\leq m\sum_{i=1}^mz_i^2(D_tc_{i,j}^n)^2,
$$
we thus have
$$
III \leq \sum_{i=1}^m\frac{\alpha \eta z_i^2m\tau^2}{2\epsilon}\sum_{j=1}^NhA_j^2 (D_tc_{i,j}^n)^2.
$$
Collecting  estimates on $I,II$ and $III$ we arrive at 
$$
E_h^{n+1}-E_h^n\leq \sum_{i=1}^m \Bigg( \tau \sum_{j=1}^NhA_j  (D_tc_{i,j}^n) (\log c_{i,j}^{n+1}+z_i\psi_j^n)   +  \frac{\alpha \eta z_i^2m\tau^2}{2\epsilon}\sum_{j=1}^NhA_j^2 (D_tc_{i,j}^n)^2\Bigg).
$$
For (\ref{FE})  to hold, it remains to find a sufficient condition on time step $\tau$ so that for all $i=1,\cdots,m$, 
\begin{equation}\label{Final}
 \frac{\alpha\eta  z_i^2m\tau^2}{2\epsilon}\sum_{j=1}^NhA_j^2 (D_tc_{i,j}^n)^2 \leq -\frac{\tau}{2} \sum_{j=1}^NhA_j  (D_tc_{i,j}^n)(\log c_{i,j}^{n+1}+z_i\psi_j^n).
\end{equation}
This is nothing but 
$$
\frac{\alpha\eta z_i^2m\tau^2}{2\epsilon}\|\vec{\xi}\|^2+\frac{\tau}{2} \vec{\xi}\cdot \vec{\mu}\leq 0,
$$
where 
$$
\vec{\xi}_j=\sqrt{h}A_jD_t c_{i,j}^{n}, \; \vec{\mu}_j=\sqrt{h}(\log c_{i,j}^{n+1})+z_i\psi^n_j).
$$
Note that $I=\tau \vec{\xi}\cdot \vec{\mu} \leq 0$. One can verify using (\ref{CC}) and flux (\ref{fullyCi}) that $\vec{\xi}\cdot \vec{\mu}=0$ if and only if $ \vec{\xi}=0$. 
Therefore 
$$
0< c_0 \leq \frac{-\vec{\xi}\cdot \vec{\mu}}{\|\vec{\xi}\|^2}\leq \frac{\|\vec{\mu}\|}{\|\vec{\xi}\|} \quad \text{for} \; \vec{\xi} \not=0,
$$
where $c_0$ depends on the numerical solution at $t_n$ and $t_{n+1}$. We thus obtain (\ref{Final}) by taking
$$
\tau\leq C^*\frac{\epsilon}{\eta} , \quad \text{where  \ \ } C^*=\min_{1\leq i\leq m} \frac{c_0}{\alpha z_i^2m}>0.
$$
Finally, we return to the proof of claim (\ref{em}):
For any $y\in \mathbb{R}^N$ with $\|y\|=1$, we have the following 
\begin{align*}
y\cdot M y& =\frac{h}{\eta} A_{1/2}y_1^2+ \sum_{j=1}^{N-1}A_{j+1/2}(y_{j+1}-y_j)^2 +\frac{h}{\eta} A_{N+1/2}y_N^2 \\
& \geq \min_{\|y\|=1} \{ \frac{h}{\eta} A_{1/2}y_1^2+ \sum_{j=1}^{N-1}A_{j+1/2}(y_{j+1}-y_j)^2 +\frac{h}{\eta} A_{N+1/2}y_N^2\}\\
&  =\frac{ h}{N \eta} (A_{1/2}+A_{N+1/2}), 
\end{align*}
where the minimum is achieved at $y=(1, \cdots, 1)/\sqrt{N}$. Replacing $y$ by $y/\|y\|$ and then further set $y=M^{-1/2}\zeta$ leads to (\ref{em}).
\end{proof}
\begin{rem} Though $C^*$ is not explicitly given, it is about $O(1)$ as can be seen from a formal limit $\Delta t \to 0$.
The sufficient condition $\tau \leq C^* \epsilon/\eta$ suggests that for smaller $\epsilon/\eta $, one should consider a smaller time step to ensure the scheme stability.  This is consistent with our numerical results. 
\end{rem}

%%%%%%%%%%%%%%%%%%%%%%%%%%%%%%%%%%%%%%%%%%%%%%%%%%%%%%%%%%%%%%%%%%%%%%%%%%%%%%%%%%%%%%
%Numerical examples
%%%%%%%%%%%%%%%%%%%%%%%%%%%%%%%%%%%%%%%%%%%%%%%%%%%%%%%%%%%%%%%%%%%%%%%%%%%%%%%%%%%%%%

\section{Numerical tests}
In this section, we implement the fully discrete scheme (\ref{fullyCi})-(\ref{Ps2}) with different boundary conditions. Errors are measured in the following discrete $l^{\infty}$ norm:  
$$
e_f=\max_{1\leq j \leq N}| f_j-\bar{f}_{j}|.
$$
Here $\bar{f}_{j}$ denotes the average of $f$ on cell $I_j$.  In what follows we take $f_j=c_{i,j}^n$ or $\psi_{j}^n$ at time $t=n\tau.$

\subsection{Accuracy test} In this example we numerically verify the accuracy and order of schemes (\ref{fullyCi})-(\ref{Ps2}) with first order boundary flux (\ref{BC11}) and second order boundary flux of form (\ref{U22}). 
\begin{example}\label{ex1}
Consider the initial value problem with source term
 
 \begin{equation}
 \left \{
\begin{array}{rl}
 \hfill  \partial_t c_1 =&\frac{1}{A(x)}\partial_x(A(x)D_1(\partial_x c_1 +  z_1c_1 \partial_x \psi)+f_1(x,t),  \hfill \ \ \  x\in [0, \ 1], \ t>0,\\
 \hfill  \partial_t c_2 =&\frac{1}{A(x)}\partial_x(A(x)D_2(\partial_x c_2 +  z_2c_2 \partial_x \psi)+f_2(x,t),  \hfill \ \ \   x\in [0, \ 1], \ t>0,\\
 \hfill  -\frac{1}{A(x)}&\partial_x(\epsilon A(x) \partial_x \psi) =z_1c_1 +  z_2c_2 -\rho(x)+f_3(x,t),  \hfill \ \ \  x\in [0, \ 1], \ t>0,\\ 
  \hfill c_1(x,0)=&x^2(1-x) , \quad   c_1(0,t)=c_1(1,t)=0, \\
  \hfill c_2(x,0)=&x^2(1-x)^2 , \quad   c_2(0,t)=c_2(1,t)=0, \\
    \hfill \psi(0,t)=&0 , \quad   \psi(1,t)=-\frac{1}{60}e^{-t}.
\end{array}
\right.
\end{equation}
Here we take $A(x)=(5-4x)^2,$ $D_1=D_2=1,$ $z_1=-z_2=1,$ $\epsilon=1$ and $\rho(x)=0,$  source terms are 
\begin{align*}
f_1(x,t)&=\frac{4x^4 -9x^3+53x^2 -54x+10}{4x-5}e^{-t}+\frac{40x^7-71x^6+30x^5}{20}e^{-2t},\\
f_2(x,t)&=\frac{4x^5-13x^4 +94x^3-161x^2 +84x-10}{5-4x}e^{-t}+\frac{22x^8-60x^7+53x^6-15x^5}{10}e^{-2t},\\
f_3(x,t)&=-\frac{2x^4}{5}e^{-t}.
\end{align*}
The exact solution to (\ref{ex1}) is 
$$
c_1(x,t)=x^2(1-x)e^{-t}, \quad c_2(x,t)=x^2(1-x)^2e^{-t}, \quad \text{and} \;  \psi(x,t)=-\frac{x^5(3-2x)}{60}e^{-t}.
$$ 
We use the time step $\tau=h^2 $ to compute numerical solutions. The errors and orders at $t=1$ are listed in Table 1 and Table 2.
 
  \begin{table}[ht]\label{ex11}
        \centering
                \caption{\tiny{Accuracy for Example \ref{ex1} with first order boundary approximations (\ref{BC11}) }}
        \begin{tabular}{|c| c |c |c| c |c|c| c |}
            \hline
              N& \ $c_1$ error & order& $c_2$ error &order &$\psi$ error &order \\ [0.5ex] 
                          \hline
          40 &   0.11184E-03&-&  0.57759E-04&- &0.83275E-05&-\\
            80 &  0.28354E-04&  1.9798 &   0.14407E-04&  2.0033  &0.20810E-05&2.0006  \\
            160 &   0.71370E-05&  1.9902 &   0.36019E-05& 1.9999   & 0.52013E-06& 2.0003 \\
            320 &   0.17903E-05&  1.9951&   0.90047E-06&  2.0000  &0.13002E-06& 2.0001  
             \\ [1ex]
            \hline
        \end{tabular}
     \end{table}

    \begin{table}[ht]\label{ex121}
        \centering
                 \caption{\tiny{Accuracy for Example \ref{ex1} with second order boundary approximations (\ref{U22}) }}
        \begin{tabular}{|c| c |c |c| c |c|c| c |}
            \hline
              N& \ $c_1$ error & order& $c_2$ error &order &$\psi$ error &order \\ [0.5ex] 
                          \hline
          40 &   0.10014E-03&-&  0.69633E-04&- &0.37021E-05&-\\
            80 &  0.25204E-04&  1.9903 &   0.18005E-04&  1.9514 &0.93954E-06&1.9783  \\
            160 &   0.63218E-05&  1.9952 &   0.45767E-05& 1.9760   & 0.23755E-06& 1.9837 \\
            320 &   0.15830E-05&  1.9977&   0.11536E-05&  1.9882  &0.59655E-07& 1.9935
             \\ [1ex]
            \hline
        \end{tabular}                  
     \end{table}
     We see from Table 1 and Table 2 that both first and second order boundary fluxes yield second order convergent solutions. The numerical errors with both fluxes are comparable.

In the remaining numerical tests we only use first order boundary flux (\ref{BC11}) for the Dirichlet boundary value problem.
\end{example}

 \subsection{Effects of permanent charge and channel geometry}  The key structure of an ion channel includes both the channel shape and the permanent charge (see e.g. \cite{WS15}).  
 We present numerical examples to illustrate the effects from the channel geometry or the permanent change.   While we also examine dependence of the total current (\ref{J}) on voltage $V$, which is known as the current-voltage (I-V) relation in \cite{WS15}.  
 Note that (\ref{J}) can be reformulated as
$$
J=-\sum_{i=1}^m z_iD_iA(x) e^{-z_i\psi}\partial_x (c_ie^{z_i\psi}).
$$
 Let $J^n_{j+1/2}$ be an approximation of $J(x_{j+1/2},t_n),$ 
then $J^n_{j+1/2}$ can be computed by
$$
J^n_{j+1/2}=-\sum_{i=1}^m z_iC_{i,j+1/2}, 
$$
where $C_{i,j+1/2}$ is defined in (\ref{CC}) with $c^{n+1}_{i, j}$  replaced by $c^n_{i, j}$, that is, 
 $$
 C_{i,j+1/2} =A_{j+1/2}D_i e^{-z_i\psi_{j+1/2}^n} \frac{\left( c^{n}_{i,j+1}e^{z_i\psi_{j+1}^n} -c^{n}_{i,j}e^{z_i\psi_{j}^n}\right)}{h}, \; j=1, \cdots, N-1.
 $$

 \begin{example}\label{ex2} (Effects of channel geometry with permanent charge) We consider the system 
\begin{equation}\label{PNPQ}
 \begin{aligned}
  A(x)  \partial_t c_1 &= \partial_x (A(x)( \partial_x c_1 +c_1 \partial_x \psi)), \quad x\in [0, \ 1], \ t>0,\\
    A(x)  \partial_t c_2 &= \partial_x (A(x)( \partial_x c_2 -c_2 \partial_x \psi)), \quad x\in [0, \ 1], \ t>0,\\
   -\frac{1}{A(x)}\partial_x (\epsilon A(x) \partial_x \psi) & = c_1-c_2 -\rho(x), \quad x\in [0, \ 1], \ t>0,
\end{aligned}
\end{equation}
where $\epsilon=5\times 10^{-5},$ subject to initial and boundary conditions
\begin{equation}\label{IB}
\begin{aligned}
  c_1(x,0)& =c_2(x, 0) =0.5-0.1x, \quad x \in [0, 1],  \\
   c_i(0,t) &=0.5, \quad c_i(1,t)=0.4; \;\psi(0,t)=0, \psi(1,t)=0.5,\quad    t>0.
\end{aligned}
\end{equation}

\begin{figure}[!tbp]
\caption{Diagram of 1D computational region for the channel and bath funnels \cite{Eis04}}
  \centering
     \includegraphics[width=0.5\textwidth]{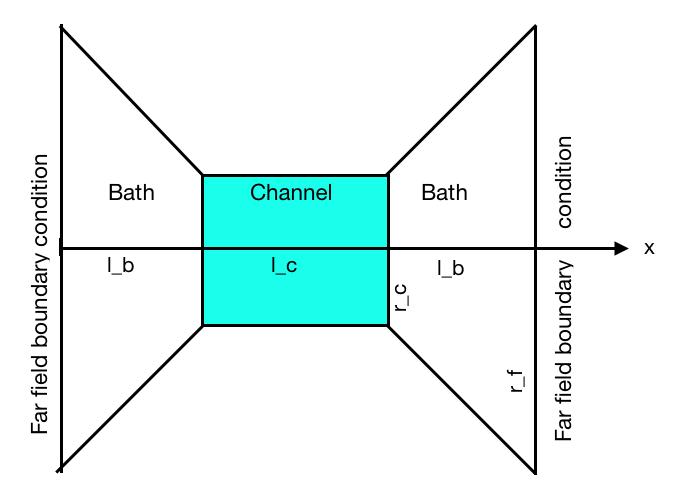}
\end{figure}
This corresponds to problem (\ref{PNP2}) with  $D_1=D_2=1$,  $z_1=-z_2=1$, with $c_{i,l}=0.5$, $c_{i,r}=0.4$, and $V=0.5$. 

The computational domain diagram is given in Figure 1,  while the cross sectional area $A(x)$  is defined as:
\begin{equation}\label{2Ax}
A(x)=\left \{
\begin{array}{rl}
 2(r_f+\frac{r_c-r_f}{l_b}x),  \quad   \hfill & x \in [0,\ l_b], \\\\
  2r_c, \quad   \hfill & x \in (l_b,\ l_b+l_c),  \\\\
  2( r_c+\frac{r_f-r_c}{l_b}(x-l_b-l_c)),  \hfill &  x \in [l_b+l_c,\ 1],
    \end{array}
\right.
\end{equation}
where the shape parameters are allowed to vary in our numerical tests. The permanent charge $\rho(x)$ is taken as
\begin{equation}\label{2Qx}
\rho(x)=\left \{
\begin{array}{rl}
 0,  \quad   \hfill & x \in [0,\ l_b], \\\\
  2Q_0 \quad   \hfill & x \in (l_b,\ l_b+l_c),  \\\\
  0,  \hfill &  x \in [l_b+l_c,\ 1], 
    \end{array}
\right.
\end{equation}
with $Q_0$ a fixed constant.  

Robert Eisenberg made clear to us the great importance of the tapered representation of the baths in one dimensional versions of PNP models of channels, that became clear in his early work with Wolfgang Nonner \cite{NE98, NCE98}, followed by many other more formal treatments such 
as in  \cite{Eis04}. 

In this numerical test, we take $h=0.01,$ $\tau=5\times 10^{-5}$.  The solutions are understood to have reached steady states if $||\psi^{n}-\psi^{n-1}||_{\infty}\leq 10^{-6}$. Table 3 shows times $t_s$ needed for reaching each steady state, number of iterations, and CPU times. 

    \begin{table}[ht]\label{ex122}
        \centering
                 \caption{\tiny{Times needed for reaching each steady state in Example \ref{ex2} when $Q_0=0.2, r_f=20$ with different channel geometry}}
        \begin{tabular}{|c| c |c |c| c |c|c| c |}
            \hline
              channel parameters &   $||\psi^{n}-\psi^{n-1}||_{\infty}$ &  time $t_s$  &  iterations $n=t_s/\tau$  &  CPU time (sec) \\ [0.5ex] % inserts table %heading
            \hline
            $r_c=l_c=\frac{1}{3}$ &   9.9822E-07&    0.0744&    1488 & 0.5244 \\
            \hline
          $r_c=l_c=\frac{1}{5}$  & 9.9879E-07 & 0.0992&  1984 & 0.6534\\
          \hline
            $r_c=l_c=\frac{1}{11}$ & 9.9920E-07 & 0.1116  &   2232 &  0.7035 
             \\ [1ex]
            \hline
        \end{tabular}                  
     \end{table}
     
From Table 3 we see that $t_s=0.1116$ is the longest time needed for reaching the steady state, so we run the simulation up to $t=0.2$.

In Figure 2 we take $Q_0=0.2$, $r_f=20$, varying $l_c$ and $r_c$  inside the channel, to obtain a series of snapshots. We see that both $c_1$ and $c_2$ coincide outside the channel, but split inside the channel with the shape evolving in terms of the channel geometry.  The profile of $\psi$ looks similar.  

In Figure 3 we fix the channel shape with $r_f=20,$ $r_c=1/5,$ $l_c=1/5$, varying $Q_0$, we observe that the difference between $c_1$ and $c_2$ inside the channel increases in terms of $Q_0$, roughly we have $c_1-c_2\approx 2Q_0$ inside the channel. We can also observe the effects on $\psi$. 

In Figure 4 is the I-V relation for the PNP system with channel shape parameters  $l_c=1/5,$ $r_c=1/5,$ $r_f=20$, and $Q_0=0.1$.  We see from 
the figure that the current is linear in the voltage.

\begin{figure}[!tbp]
\caption{Effects of channel geometry on steady state densities and potential with $Q_0=0.2$: (a)-(c) densities at $t=0.2,$ for  $l_c=r_c=\frac{1}{3},\ \frac{1}{5}$ and $\frac{1}{11}$, (d)-(f) potential profiles at $t=0.2$ for  $l_c=r_c=\frac{1}{3},\ \frac{1}{5}$ and $\frac{1}{11}$. }
\centering  
\subfigure{\includegraphics[width=0.30\linewidth]{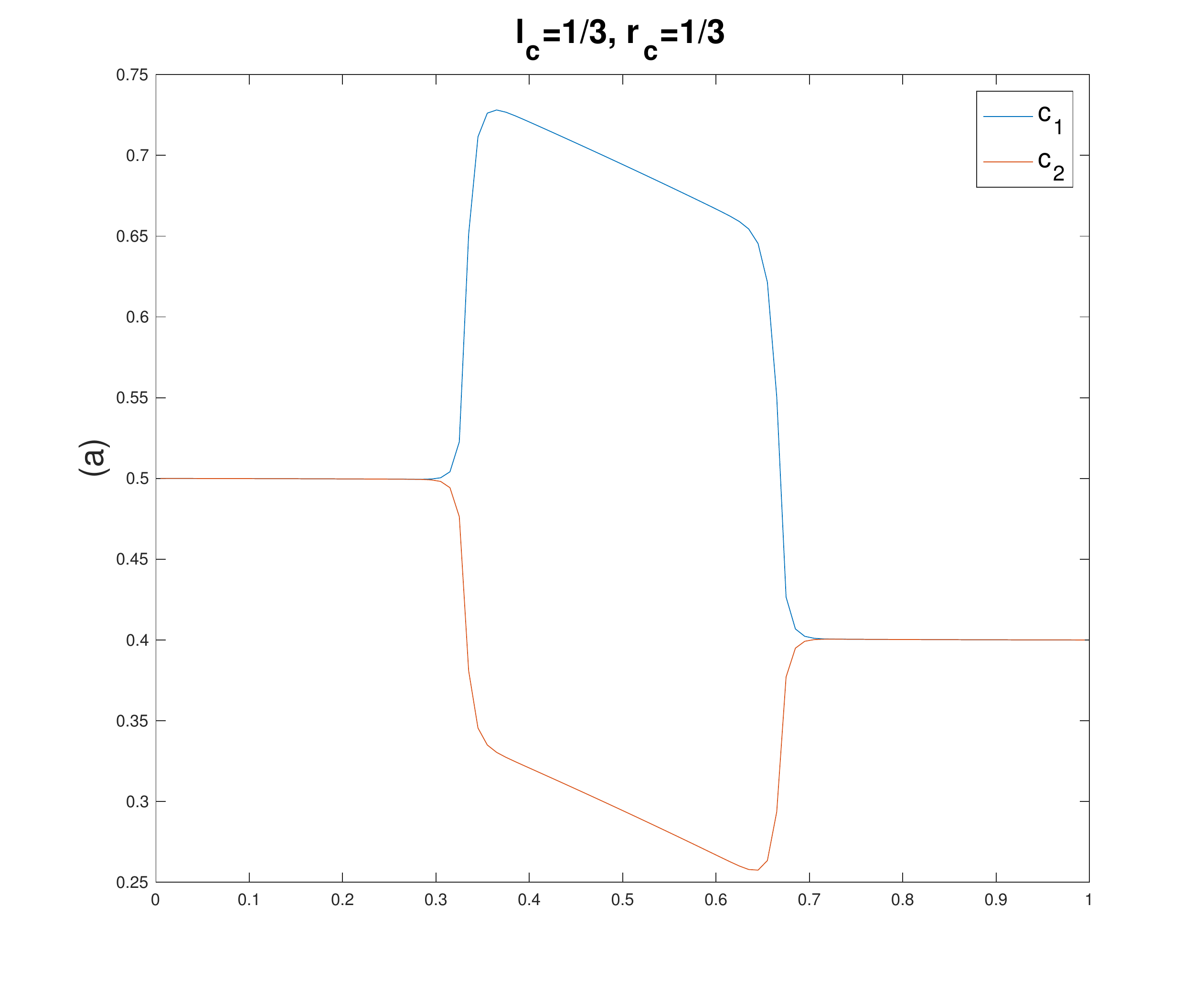}}
\subfigure{\includegraphics[width=0.30\linewidth]{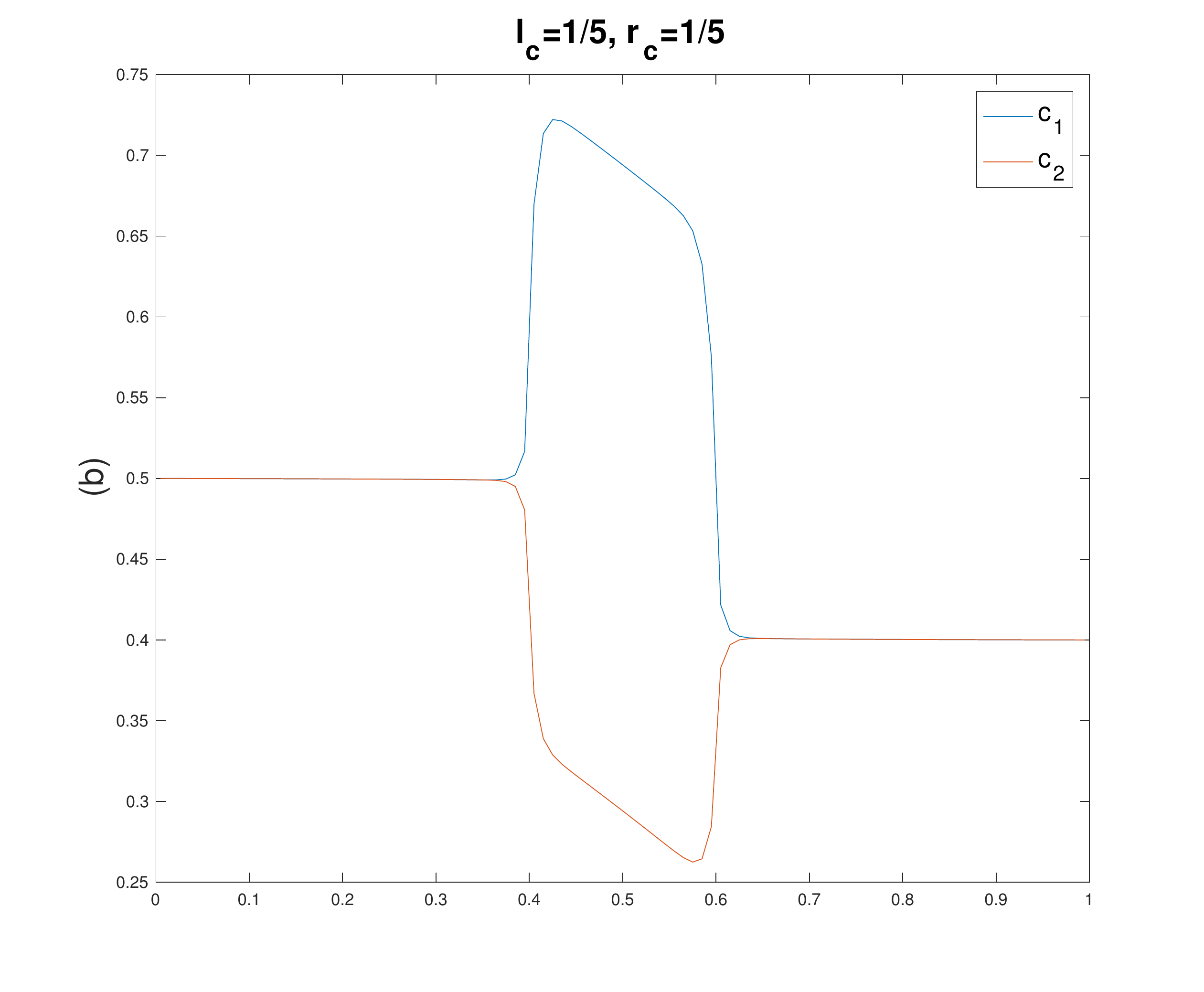}}
\subfigure{\includegraphics[width=0.30\linewidth]{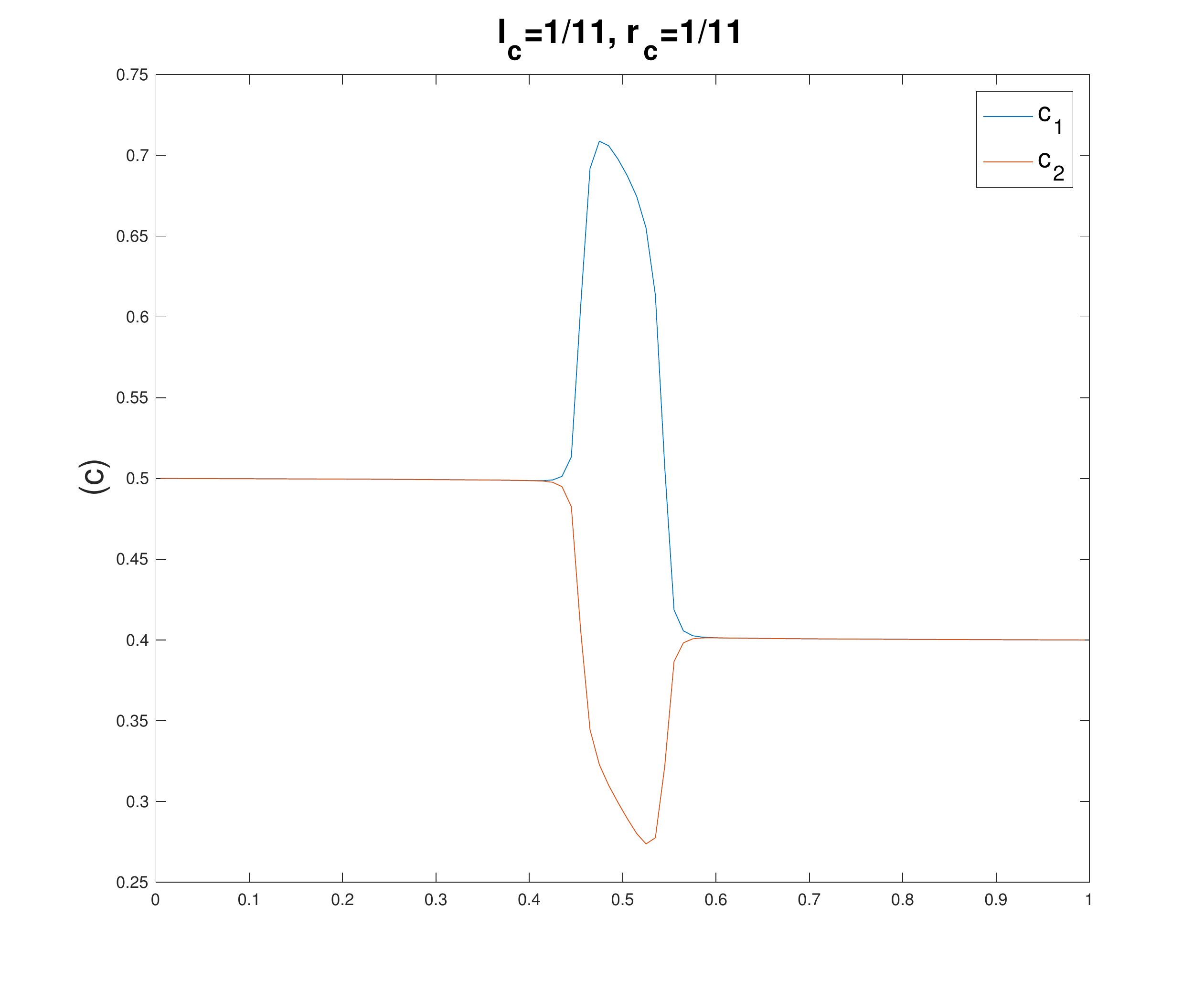}}\\
\subfigure{\includegraphics[width=0.30\linewidth]{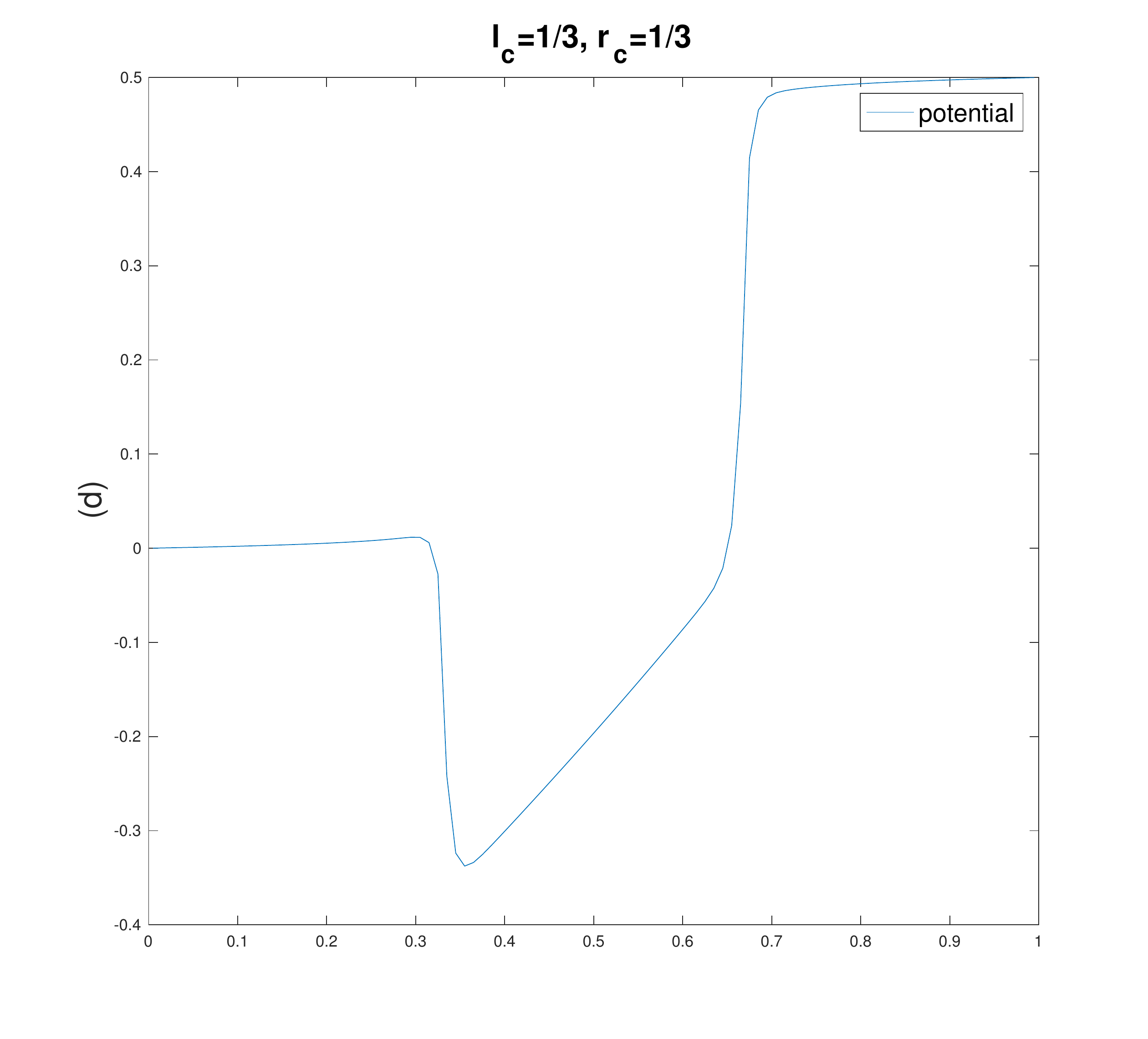}}
\subfigure{\includegraphics[width=0.30\linewidth]{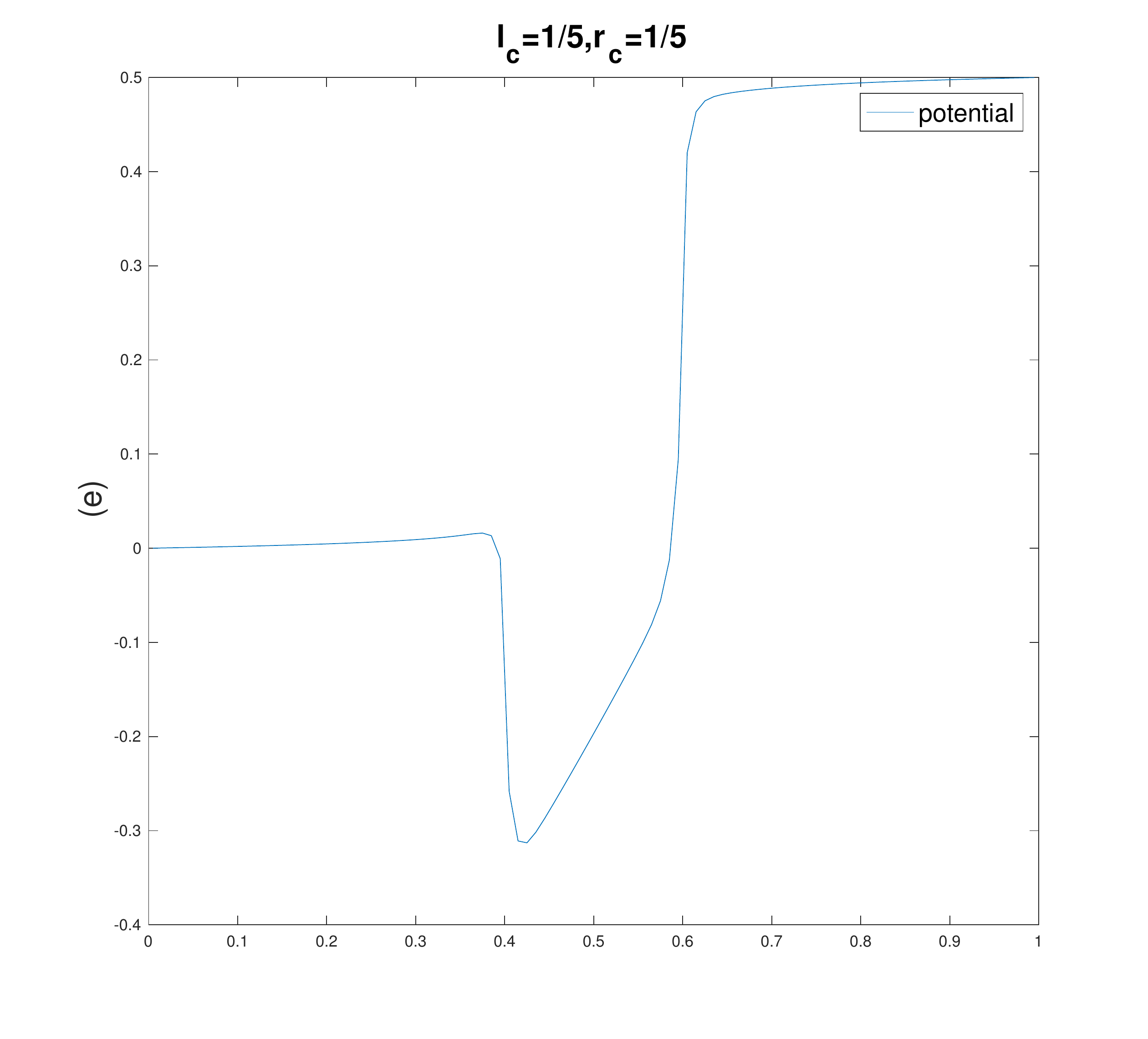}}
\subfigure{\includegraphics[width=0.30\linewidth]{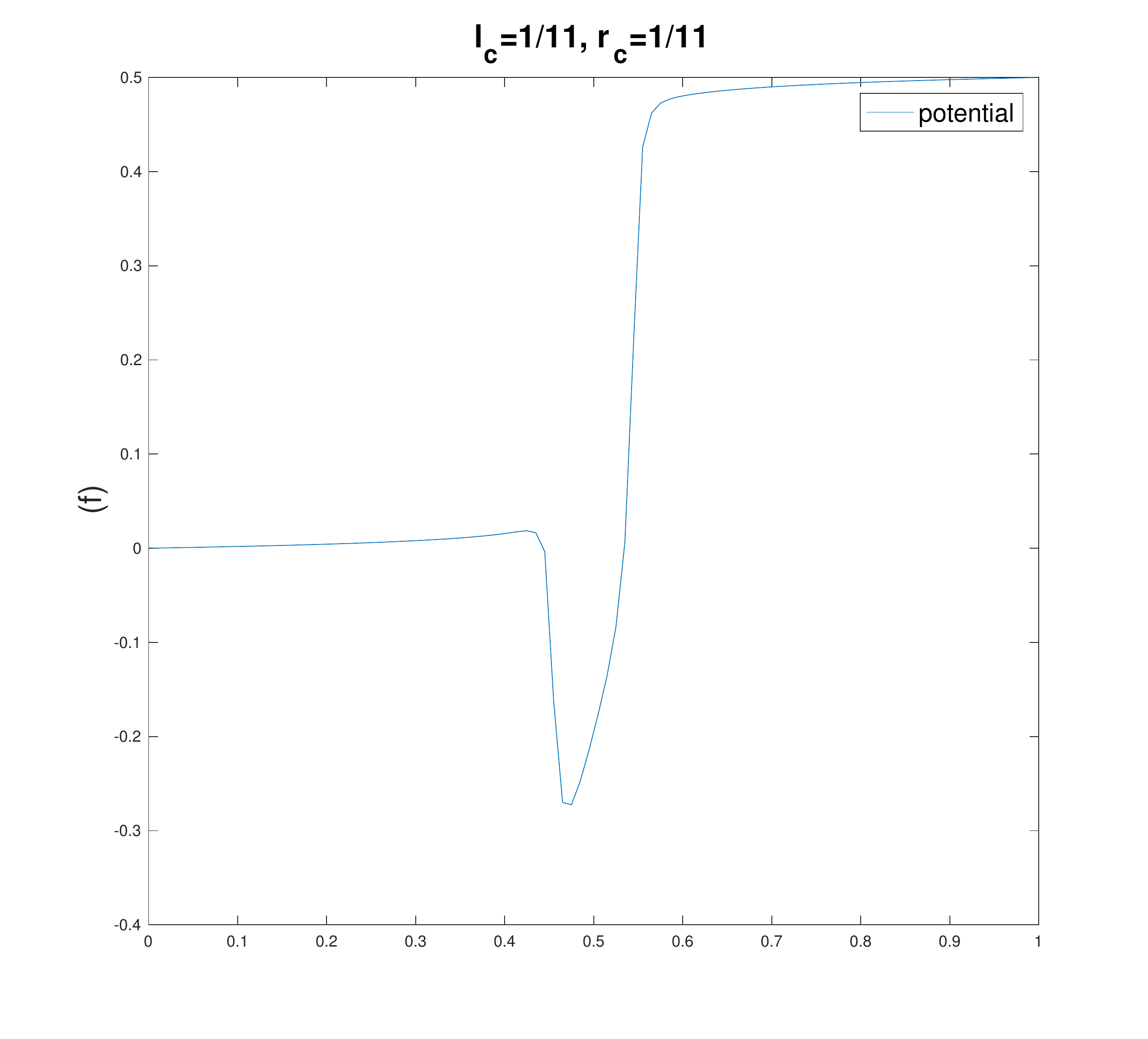}}
\end{figure}

\begin{figure}[!tbp]
\centering  
\caption{Effects of permanent charge on steady state densities and potential with $l_c=r_c=\frac{1}{5}$: (a)-(c) are computed densities at $t=0.2,$ for  $Q_0=0.05,\ 0.1,\ 0.15$, (d)-(f) are potential profiles at $t=0.2$ for  $Q_0=0.05,\ 0.1,\ 0.15$. }
\subfigure{\includegraphics[width=0.30\linewidth]{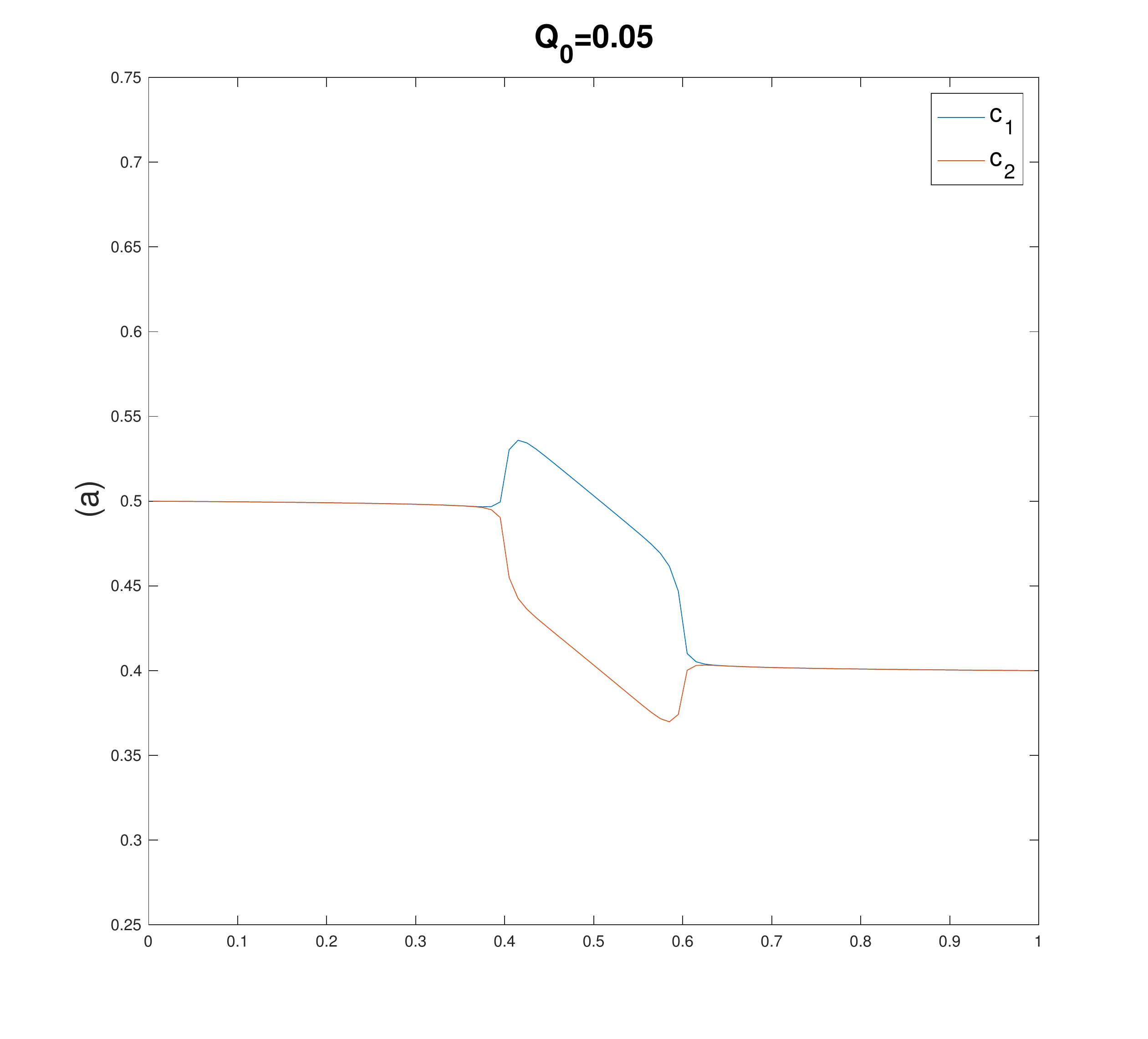}}
\subfigure{\includegraphics[width=0.30\linewidth]{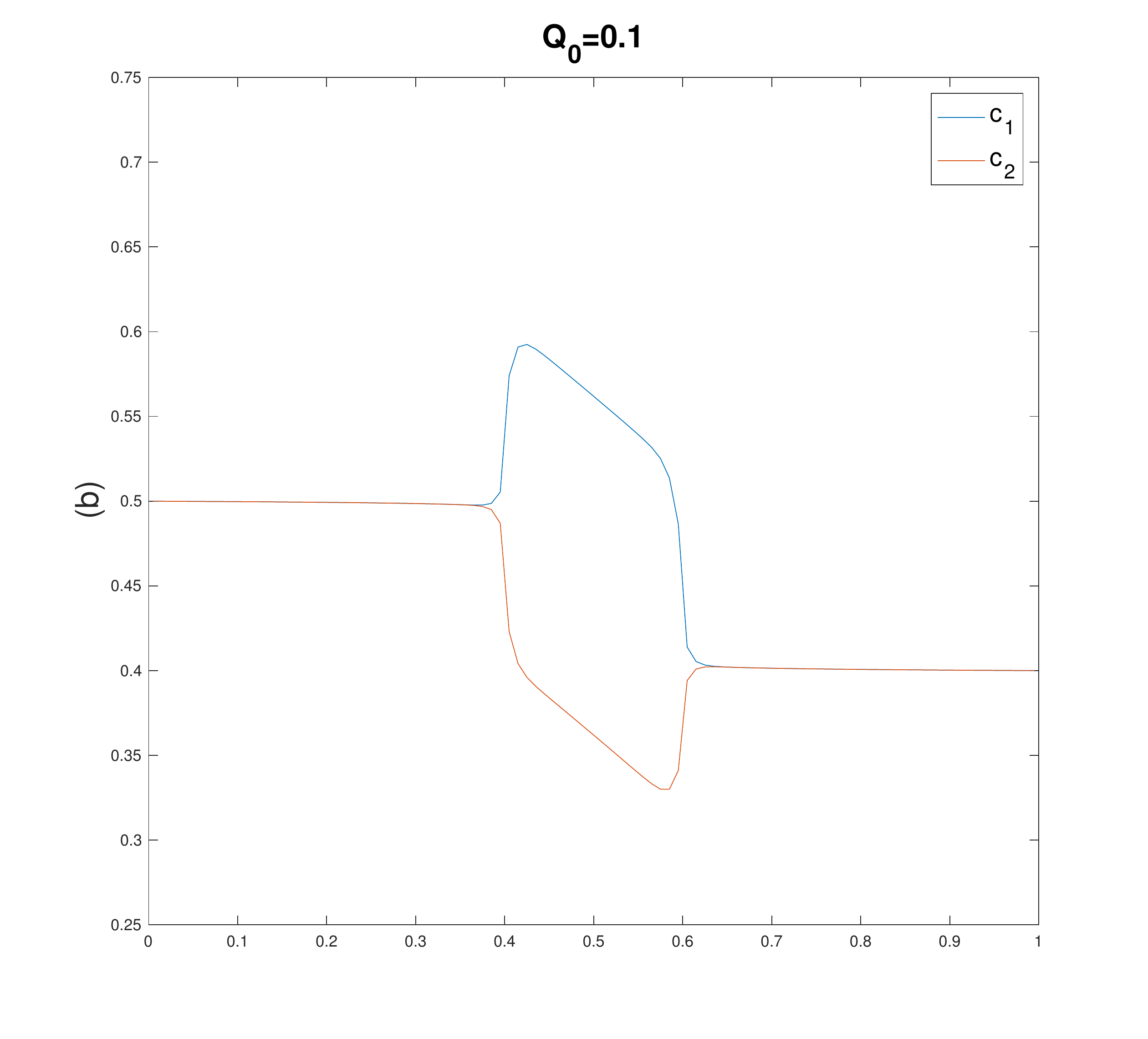}}
\subfigure{\includegraphics[width=0.30\linewidth]{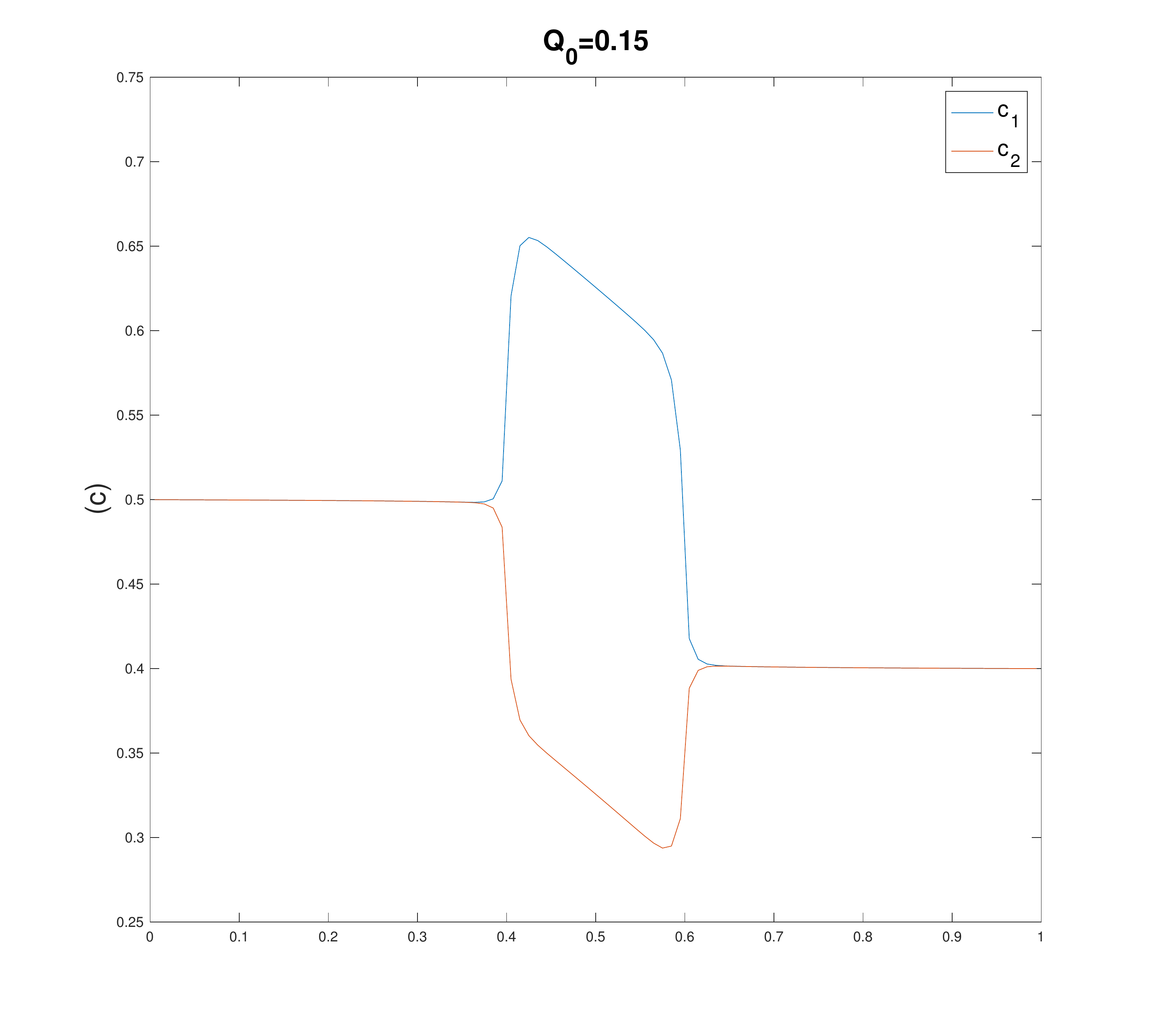}}\\
\subfigure{\includegraphics[width=0.30\linewidth]{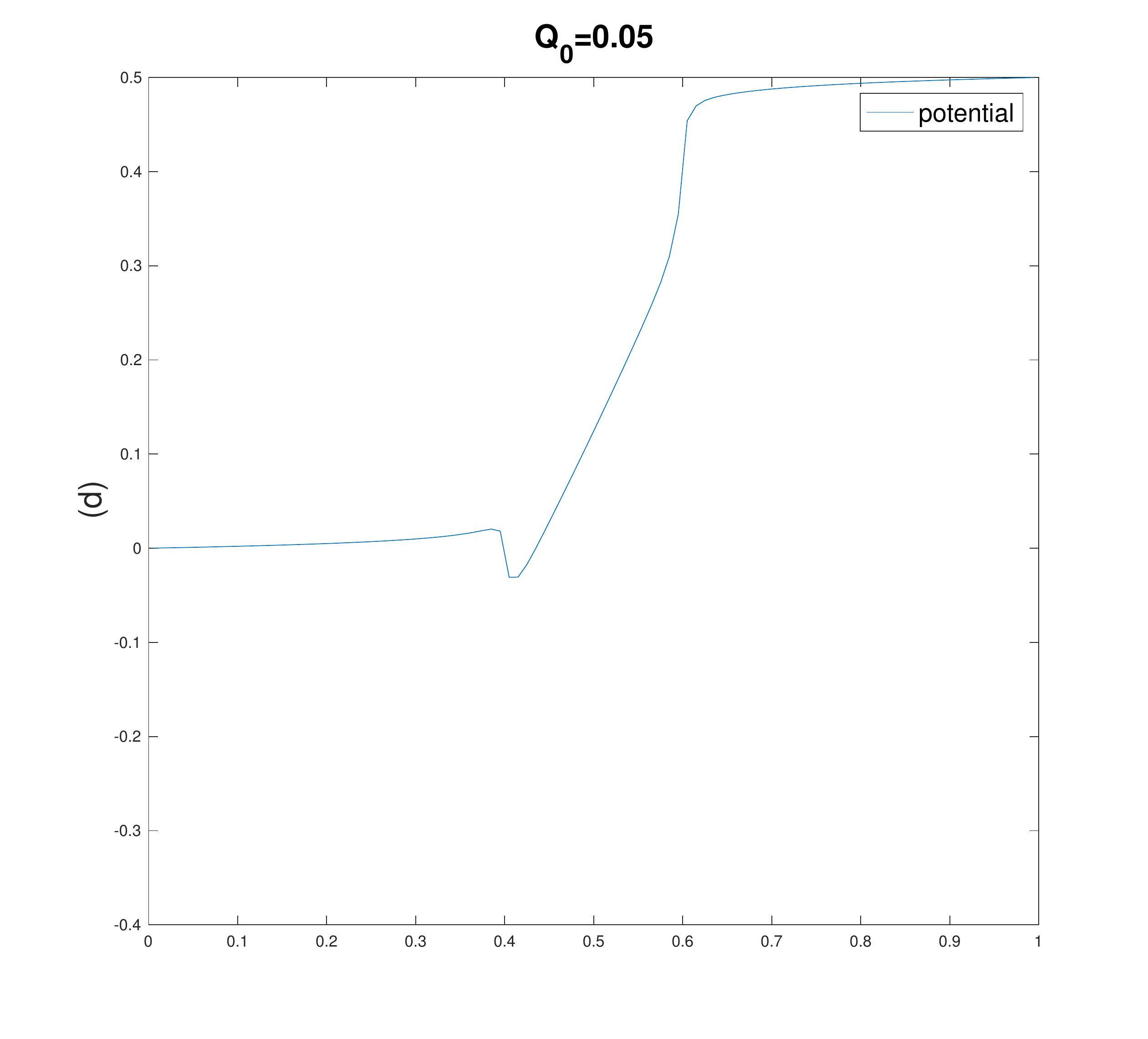}}
\subfigure{\includegraphics[width=0.30\linewidth]{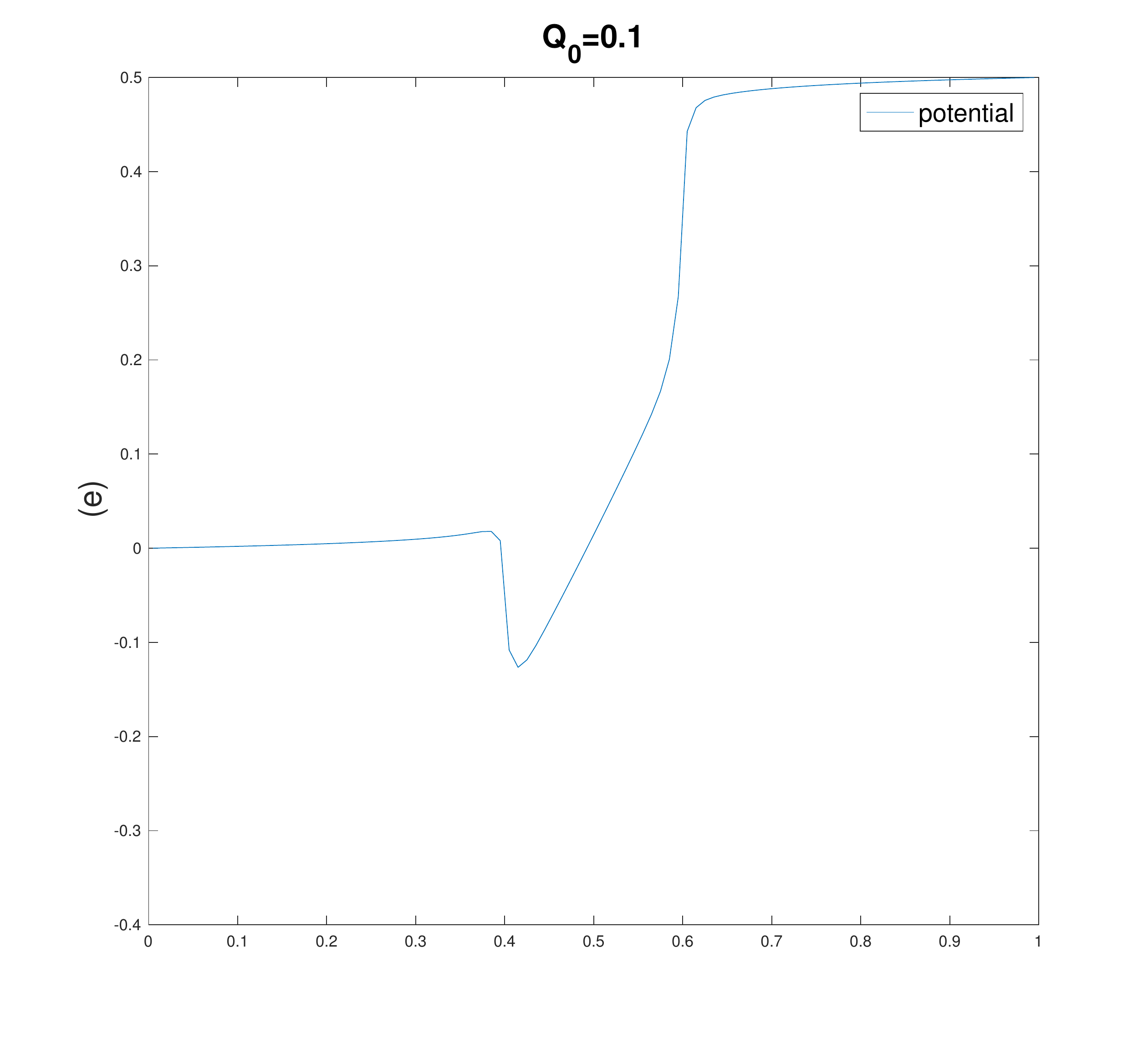}}
\subfigure{\includegraphics[width=0.30\linewidth]{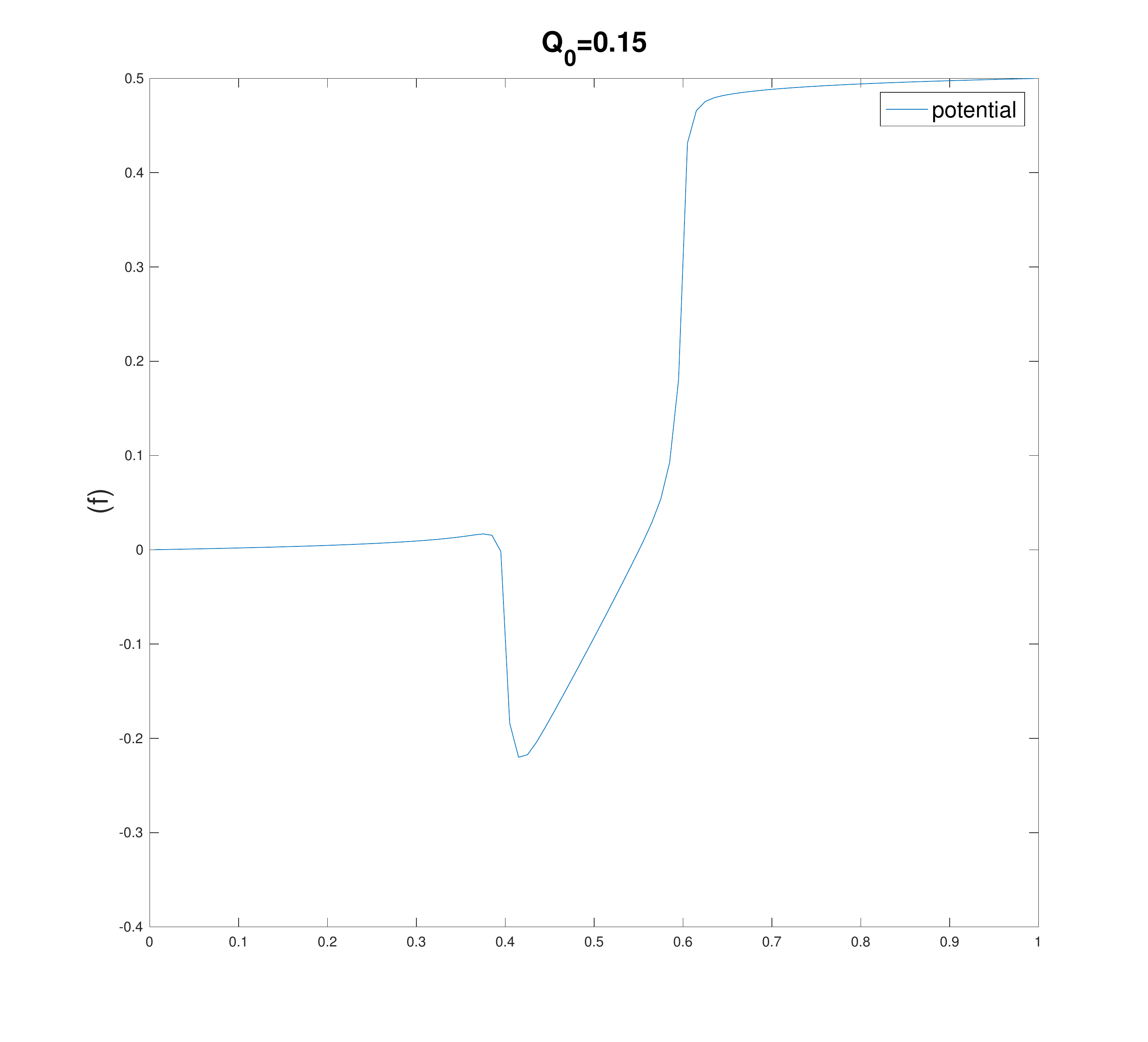}}
\end{figure}

\begin{figure}[!tbp]
\caption{I-V relation: (a) voltage for $V=0.5,\ 1,\ 3,\ 5,$ with $l_c=r_c=\frac{1}{5}$ and $Q_0=0.1$ at time $t=0.2,$ (b) current voltage relation.}
\centering  
\subfigure{\includegraphics[width=0.45\linewidth]{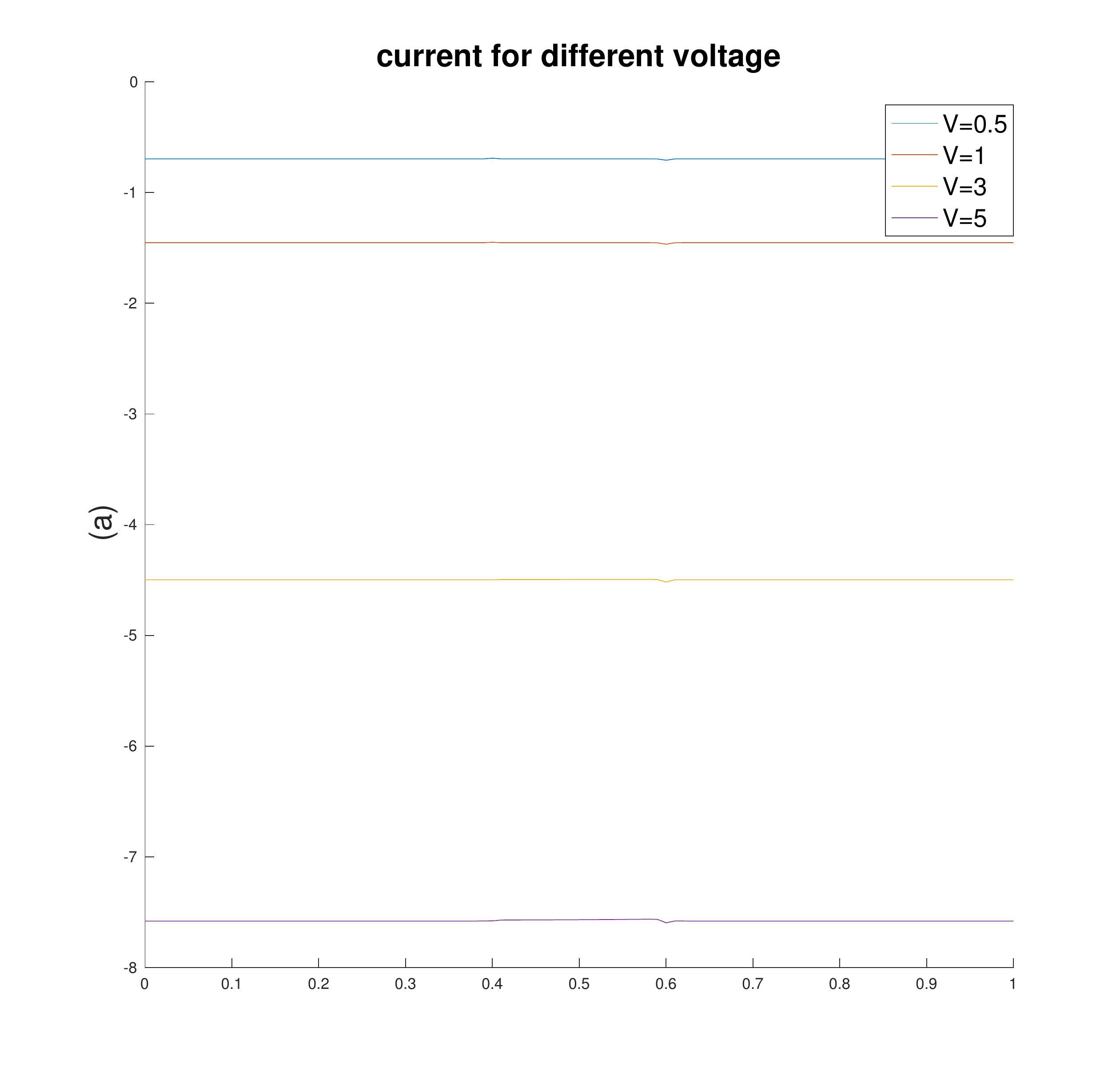}}
\subfigure{\includegraphics[width=0.45\linewidth]{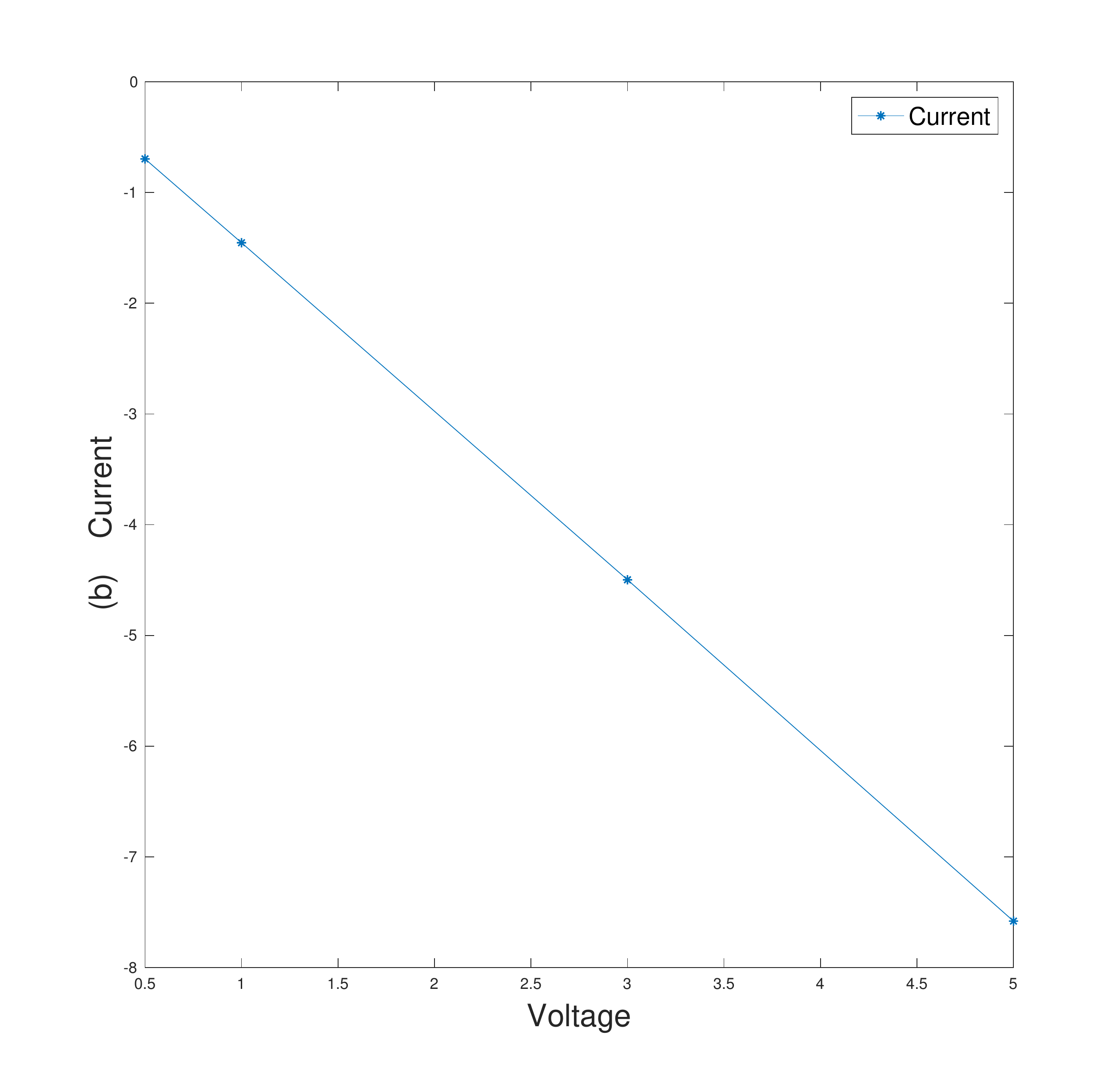}}
\end{figure}
\end{example}

\begin{example}\label{ex3} (No permanent charge in the channel $\rho=0$)  We still use problem with (\ref{PNPQ}), (\ref{IB}), (\ref{2Ax}) and (\ref{2Qx}) to test  the effects of the channel geometry, by  taking $\rho=0$, $r_f=20$ and varying $l_c$ and $r_c$. In the simulation we take $h=0.01,$ $\tau=5\times 10^{-5}$.  
 \begin{table}[ht]\label{ex13}
        \centering
                 \caption{\tiny{Times needed for reaching each steady state on Example \ref{ex3} when $\rho=0, r_f=20$ with different channel geometry}}
        \begin{tabular}{|c| c |c |c| c |c|c| c |}
            \hline
              channel parameters &   $||\psi^{n}-\psi^{n-1}||_{\infty}$ &  time $t_s$  &  iterations $n=t_s/\tau$  &  CPU time (sec) \\ [0.5ex] % inserts table %heading
            \hline
            $r_c=l_c=\frac{1}{3}$ &   9.9876E-07&    0.0589&    1178  & 0.4647 \\
            \hline
          $r_c=l_c=\frac{1}{5}$  & 9.9928E-07& 0.0747&  1494 & 0.5529\\
          \hline
            $r_c=l_c=\frac{1}{11}$ & 9.9873E-07 & 0.0888 &  1776 &  0.6116
             \\ [1ex]
            \hline
        \end{tabular}                  
     \end{table} 
     
From Table 4 we see that $t_s=0.0888$ is the longest time needed for reaching the steady state, so simulation runs up to $t=0.1$. 

In Figure 5 are snap shots of solutions for different channel geometry. In the case of no permanent charge, there does not seem to be any layering phenomenon on $c_1$ and $c_2$:  $c_1$ and $c_2$ are rather close both inside the channel (linear) and inside the bath (constant). The profile for $\psi$ is quite similar. This is consistent with the analysis in \cite{WS15}, in which the authors showed that the density in the channel gets steeper as the channel gets narrower. 
We refer to \cite{WSzeroQ15} for a study of steady state solutions to (\ref{PNPQ}) in the case of $\rho=0.$  They proved that for $\epsilon >0$ small, there is a unique nonnegative steady state to problem (\ref{PNPQ}) and (\ref{IB}). 
\begin{figure}[!tbp]
\caption{Effects of channel geometry on steady state densities and potential with $\rho=0$: (a) densities at $t=0.1,$ for  $l_c=r_c=\frac{1}{3},\ \frac{1}{5}$ and $\frac{1}{11}$,  (b) potential profiles at $t=0.1$ for  $l_c=r_c=\frac{1}{3},\ \frac{1}{5}$ and $\frac{1}{11}$. }
\centering  
\subfigure{\includegraphics[width=0.45\linewidth]{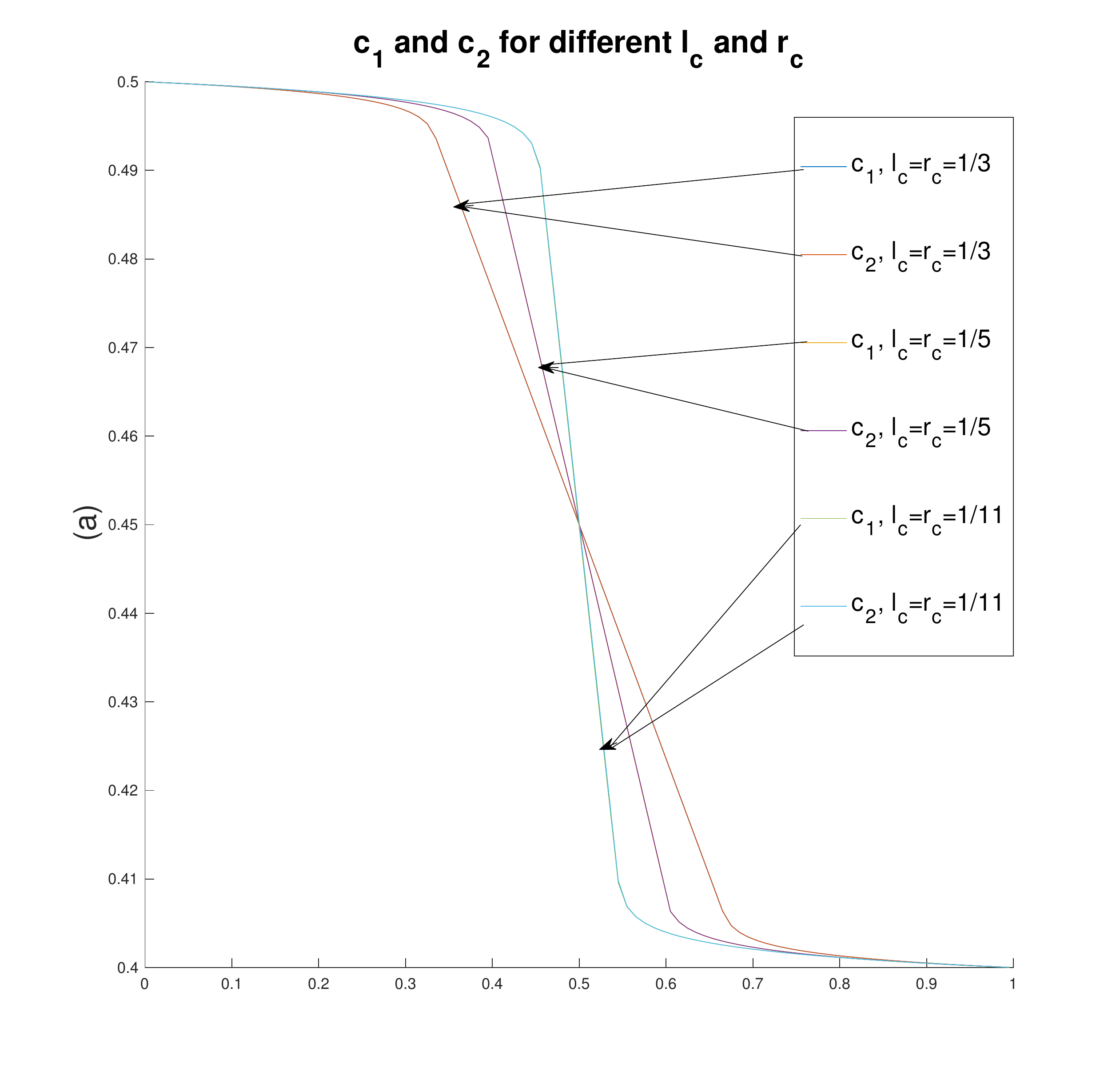}}
\subfigure{\includegraphics[width=0.45\linewidth]{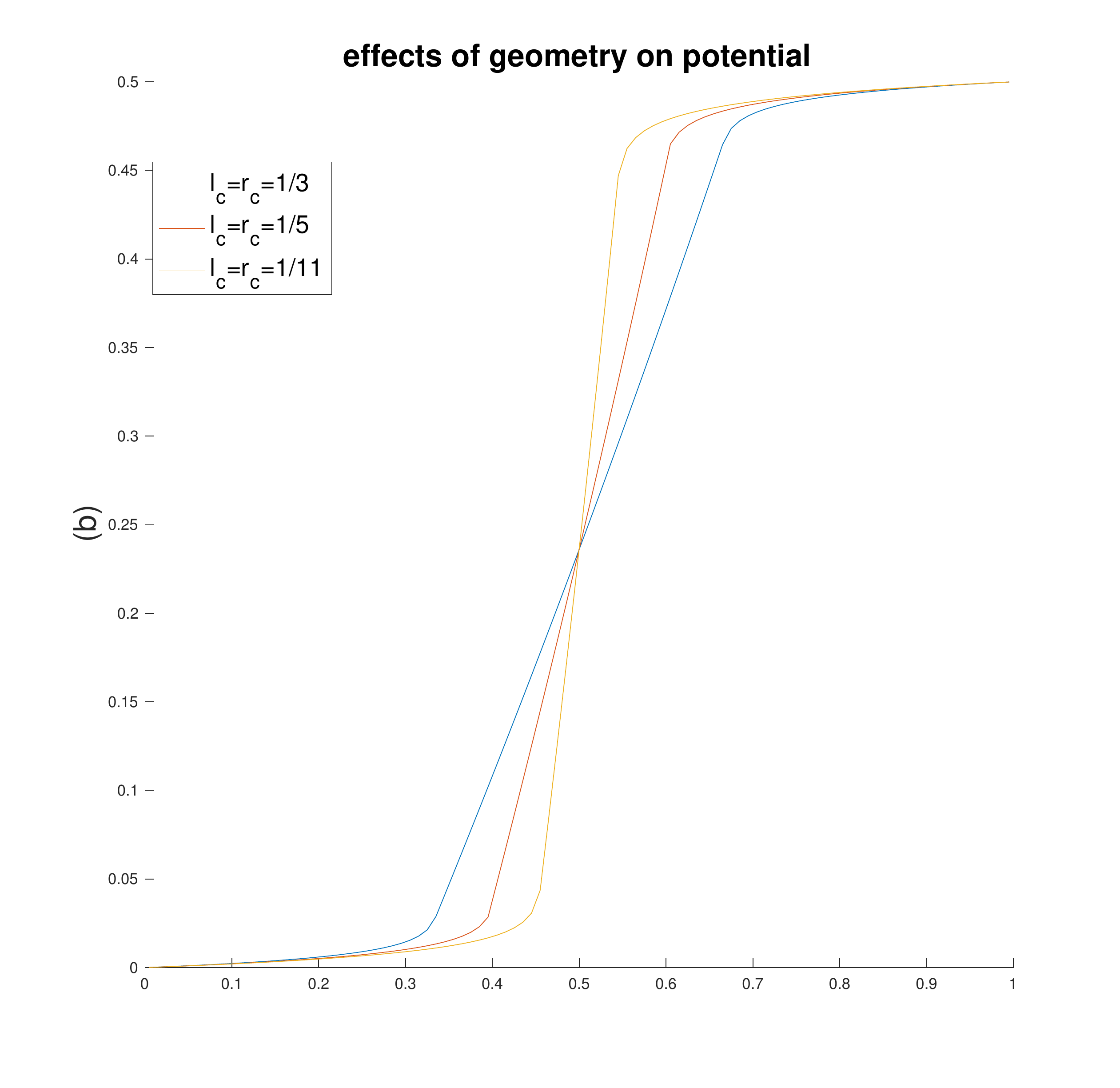}}
\end{figure}
\end{example}

 \begin{example}\label{ex4} (Variable diffusion coefficient and quadratic area function) We consider the system 
\begin{equation}\label{PNP4}
 \begin{aligned}
   A(x)  \partial_t c_1 &= \partial_x (A(x)D_1(x)( \partial_x c_1 +2c_1 \partial_x \psi), \quad x \in [-10, 10],\ t>0\\
    A(x)  \partial_t c_2 &= \partial_x (A(x)D_2(x)( \partial_x c_2 -3c_2 \partial_x \psi),  \quad x \in [-10, 10],\ t>0\\
    A(x)  \partial_t c_3 &= \partial_x (A(x)D_3(x)( \partial_x c_3 +c_3 \partial_x \psi),  \quad x \in [-10, 10],\ t>0\\
   -\frac{1}{A(x)}\partial_x (\epsilon A(x) \partial_x \psi) & = 2c_1-3c_2+c_3 -\rho(x),  \quad x \in [-10, 10],\ t>0, 
\end{aligned}
\end{equation}
where $\epsilon= 0.1,$ subject to boundary conditions
\begin{equation}\label{IB4}
\begin{aligned}
   c_i(\pm 10,t) &=0.5, \quad \psi(\pm 10,t)=0,\quad    t>0.
\end{aligned}
\end{equation}
As in \cite{GLE2018}, we choose  $A(x)=1+x^2,$ $D_i(x)=20(1-0.9e^{-x^4})$ and $\rho=Ce^{-x^4}$. This corresponds to problem (\ref{PNP2}) with   $z_1=2$, $z_2=-3$, $z_3=1$, and $c_{i,l}=c_{i,r}=0.5$, $V=0$. In this numerical test we take $h=0.1$, $\tau=10^{-3}$.

We take two different sets of initial data, first set  is  given as
\begin{equation}\label{c1}
\begin{aligned}
c^{in}_1(x)&= 0.5-0.5e^{-(x+4)^4},\\
c^{in}_2(x)&= 0.5+2e^{{-x}^4},\\
c^{in}_3(x)&= 0.5+e^{-(x-4)^4}.
\end{aligned}
\end{equation}
For the second set of initial data we take uniformly distributed random initial data $c^{0}_{i,j}\in (0, 1)$.  From Table 5 we see that $t_s=2.7410$ is the longest time needed for reaching the steady state, so simulation runs up to $t=3$. We vary the parameter $C$ to observe 
effects of the permanent charge. 

    \begin{table}[ht]\label{ex14}
        \centering
                 \caption{\tiny{Time needed for reaching steady state on Example \ref{ex4} when $C=1$ with different initial data}}
        \begin{tabular}{|c| c |c |c| c |c|c| c |}
            \hline
               initial data &   $||\psi^{n}-\psi^{n-1}||_{\infty}$ &  time $t_s$  &  iterations $n=t_s/\tau$  &  CPU time (sec) \\ [0.5ex] 
            \hline
          data (\ref{c1})   &  9.9999E-08&     2.7410&    2741 & 0.3874\\
            \hline
           random data & 9.9864E-08 & 2.0440 &  2044& 0.3167
             \\ [1ex]
            \hline
        \end{tabular}                  
     \end{table}

In Figure 6 (top three) are snap shots of solutions for initial data (\ref{c1}).  Varying $C$, we can see that $\max c_1-\min c_2$ (or $\max c_3-\min c_2$) increases in terms of $C$.

In Figure 6 (bottom three) are snap shots of solutions for random initial data, we see that the choice of initial data does not affect the steady state densities. 
  
  \begin{figure}[!tbp]
\caption{Effects of permanent charge and initial data on steady state densities: (a) is initial data profile (4.8), (b)-(c) are density profile at $t=3$ for $C=1$ and $C=2$ respectively, (d) is random initial data profile, (e)-(f) are density profile for $C=1$ and $C=2$.}
\centering  
\subfigure{\includegraphics[width=0.30\linewidth]{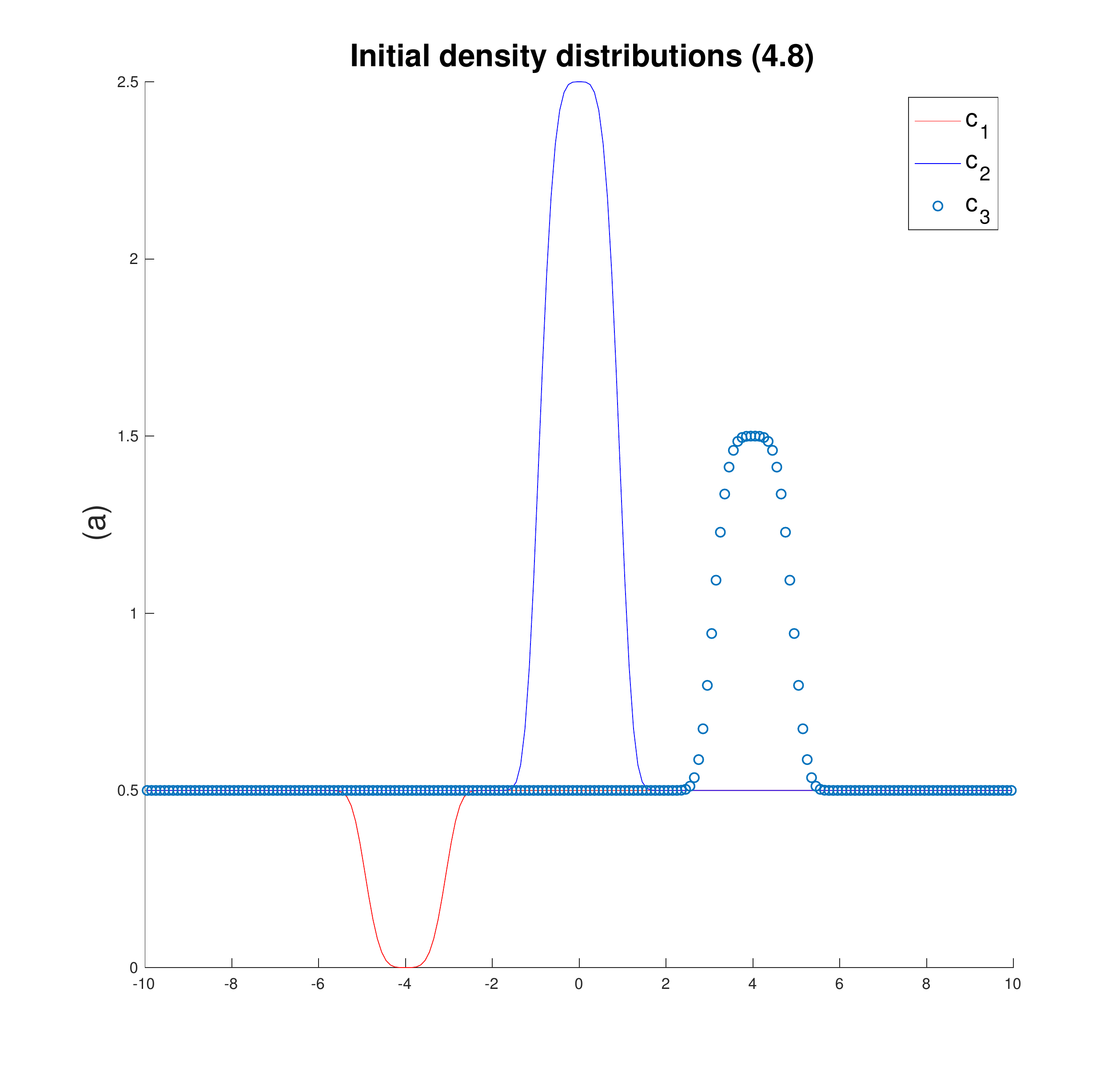}}
\subfigure{\includegraphics[width=0.30\linewidth]{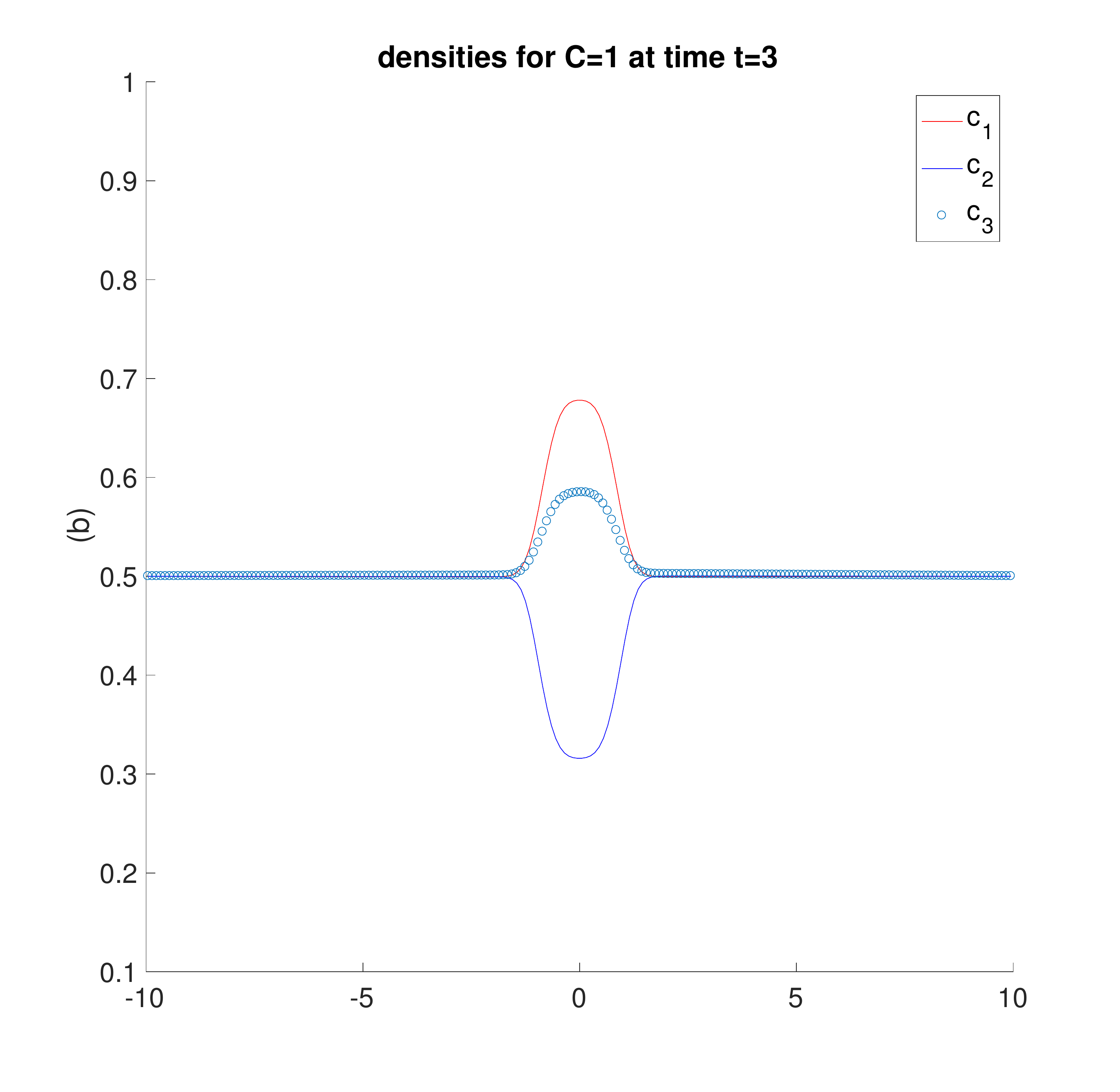}}
\subfigure{\includegraphics[width=0.30\linewidth]{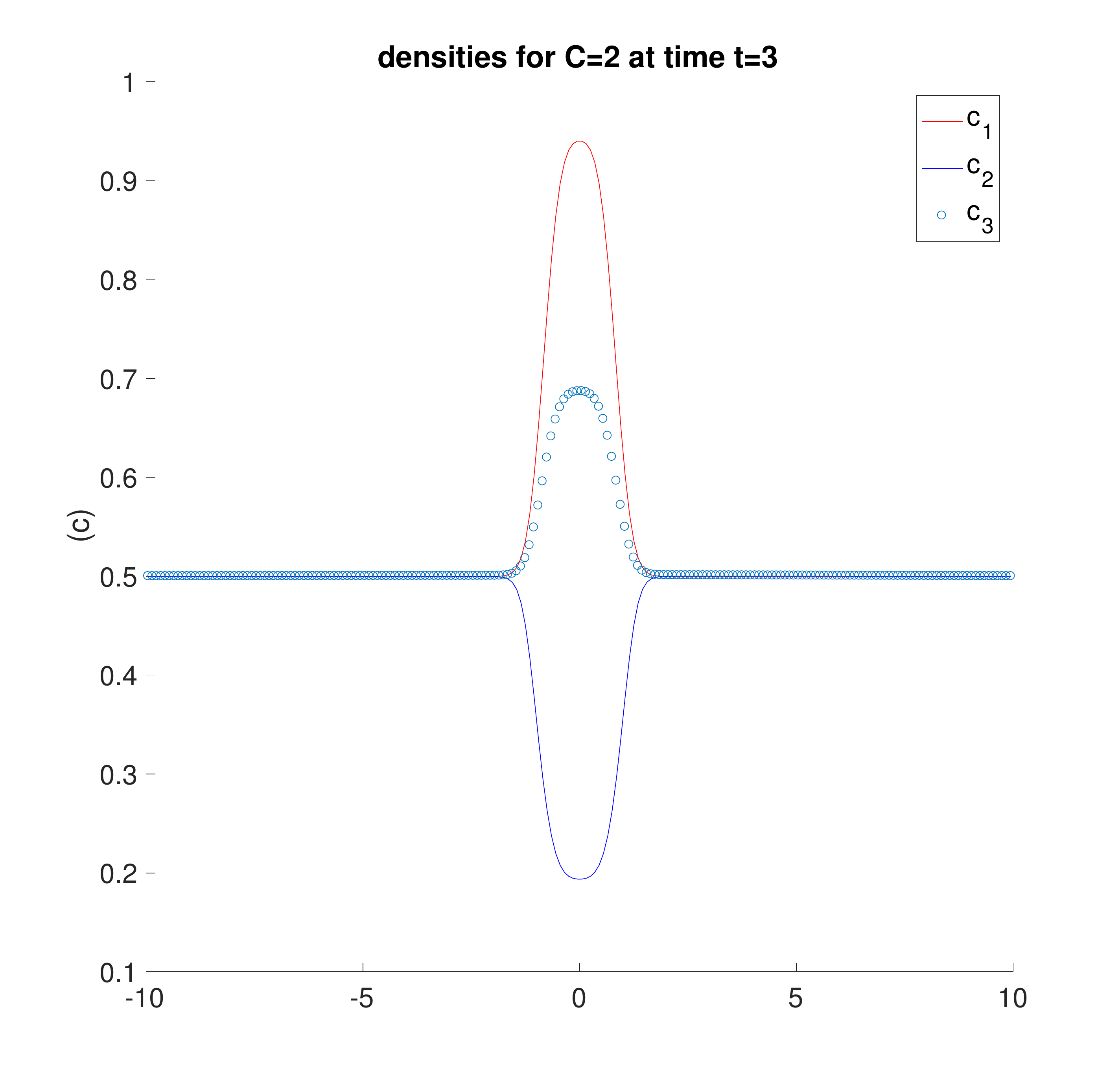}}\\
\subfigure{\includegraphics[width=0.30\linewidth]{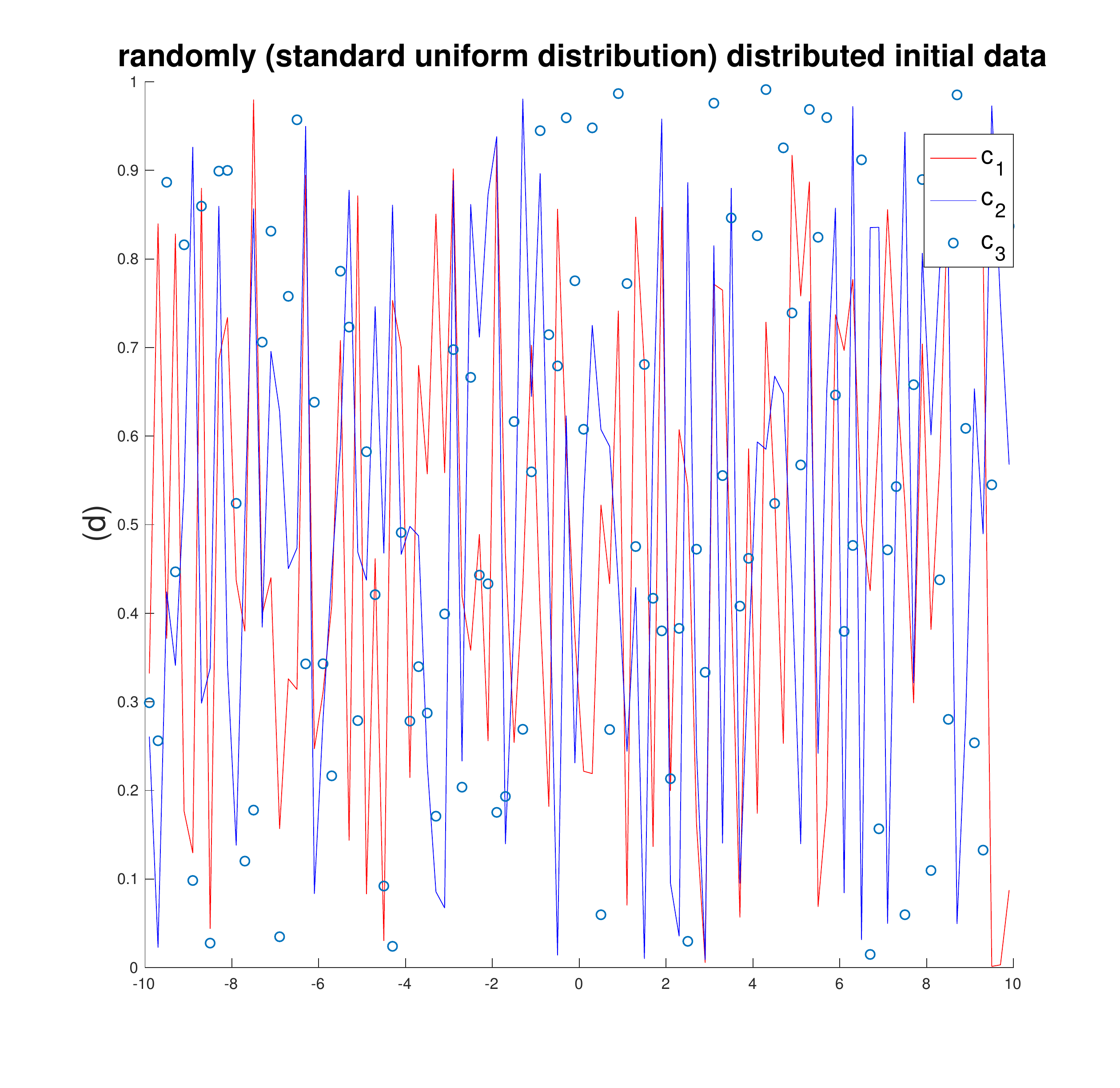}}
\subfigure{\includegraphics[width=0.30\linewidth]{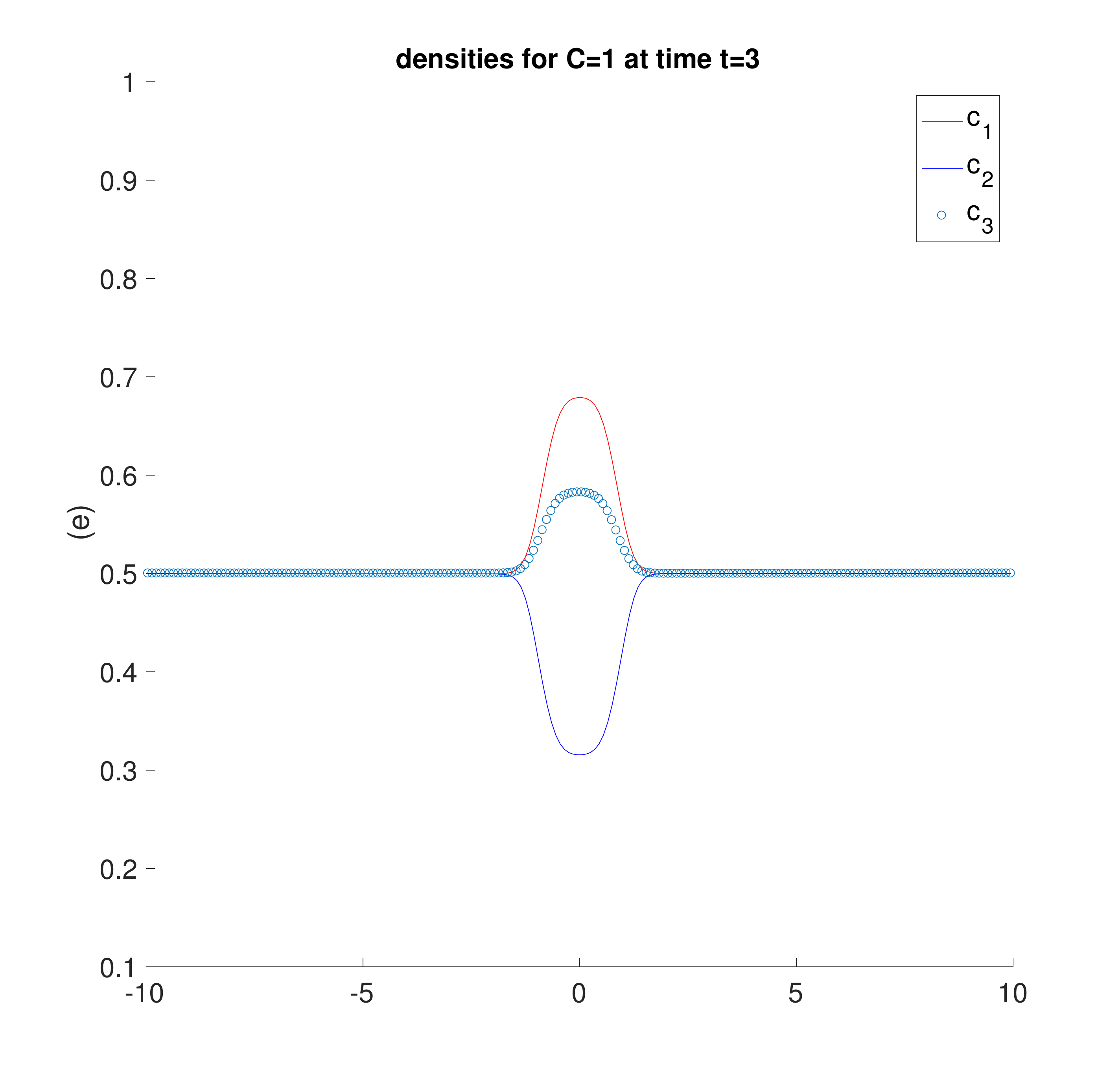}}
\subfigure{\includegraphics[width=0.30\linewidth]{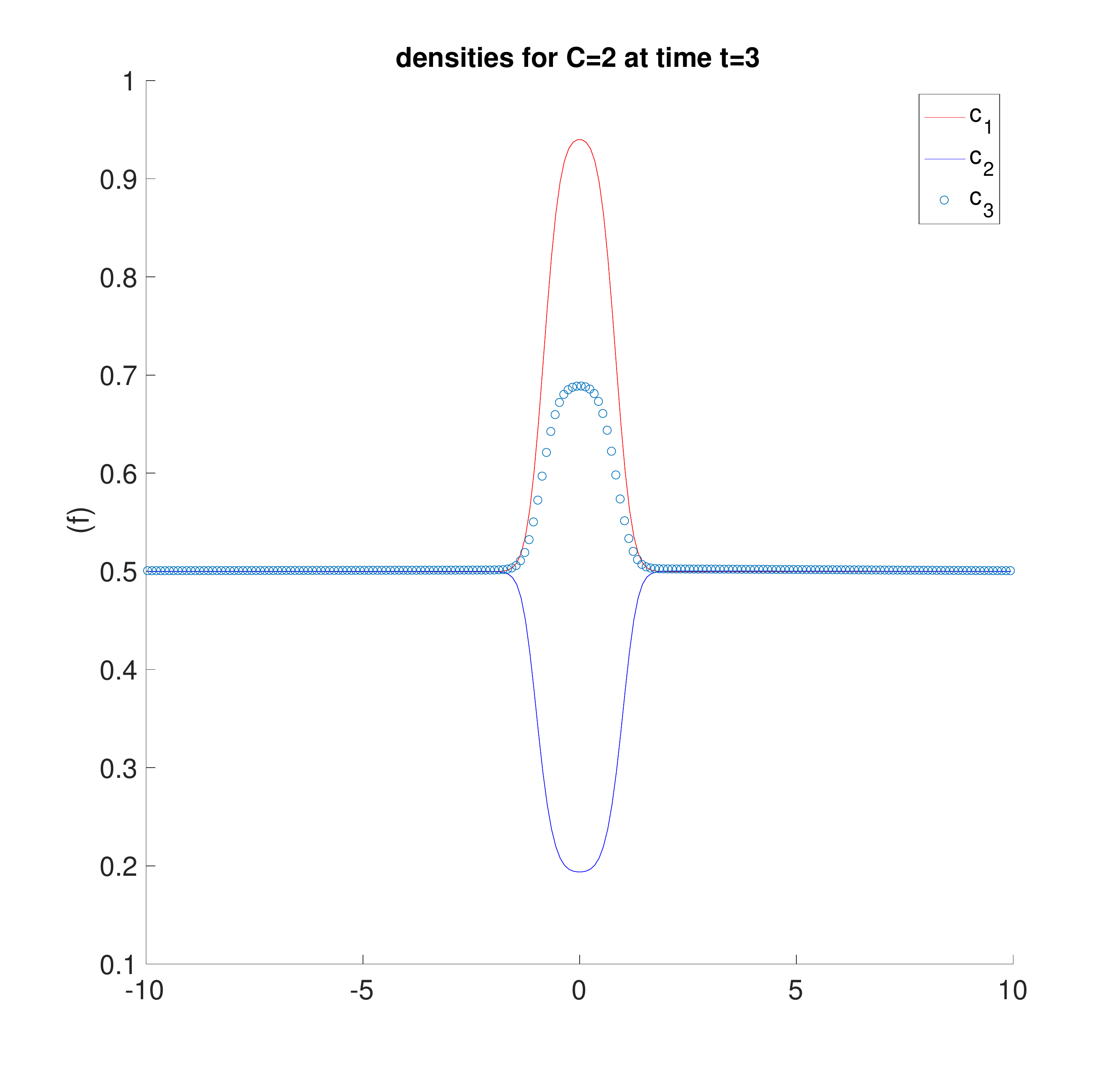}}
\end{figure}
\end{example}

 \subsection{Mass conservation and free energy dissipation}  In this numerical test we demonstrate mass conservation and free energy dissipation properties.
 
\begin{example}{(Zero flux + Robin boundary conditions)} In this example we consider (\ref{PNP4}) with initial condition (\ref{c1}) and boundary condition
\begin{equation}\label{Zf5}
\begin{aligned}
&\partial_x c_i+z_ic_i\partial_x \psi =0,\quad x=-10,10, \quad t>0,\\
& (- \eta \partial_x \psi+\psi)|_{x=-10}=-0.1,\quad (\eta \partial_x \psi+\psi)|_{x=10}=0.1, \quad t>0.
\end{aligned}
\end{equation}
We choose same $A(x), D_i(x), \rho(x)$ and $z_i$ as in Example 4.4 and choose $\eta=0.1$, $\epsilon=0.1$. In this numerical test we use scheme (\ref{fullyCi})-(\ref{Ps2}) and (\ref{ZfBC}), with $h=0.1$, $\tau=10^{-3}$.

 \begin{figure}[!tbp]
\caption{Energy dissipation and mass conservation: (a) density profiles at $t=15$ for $C=1$, (b) energy dissipation and mass conservation.}
\centering  
\subfigure{\includegraphics[width=0.45\linewidth]{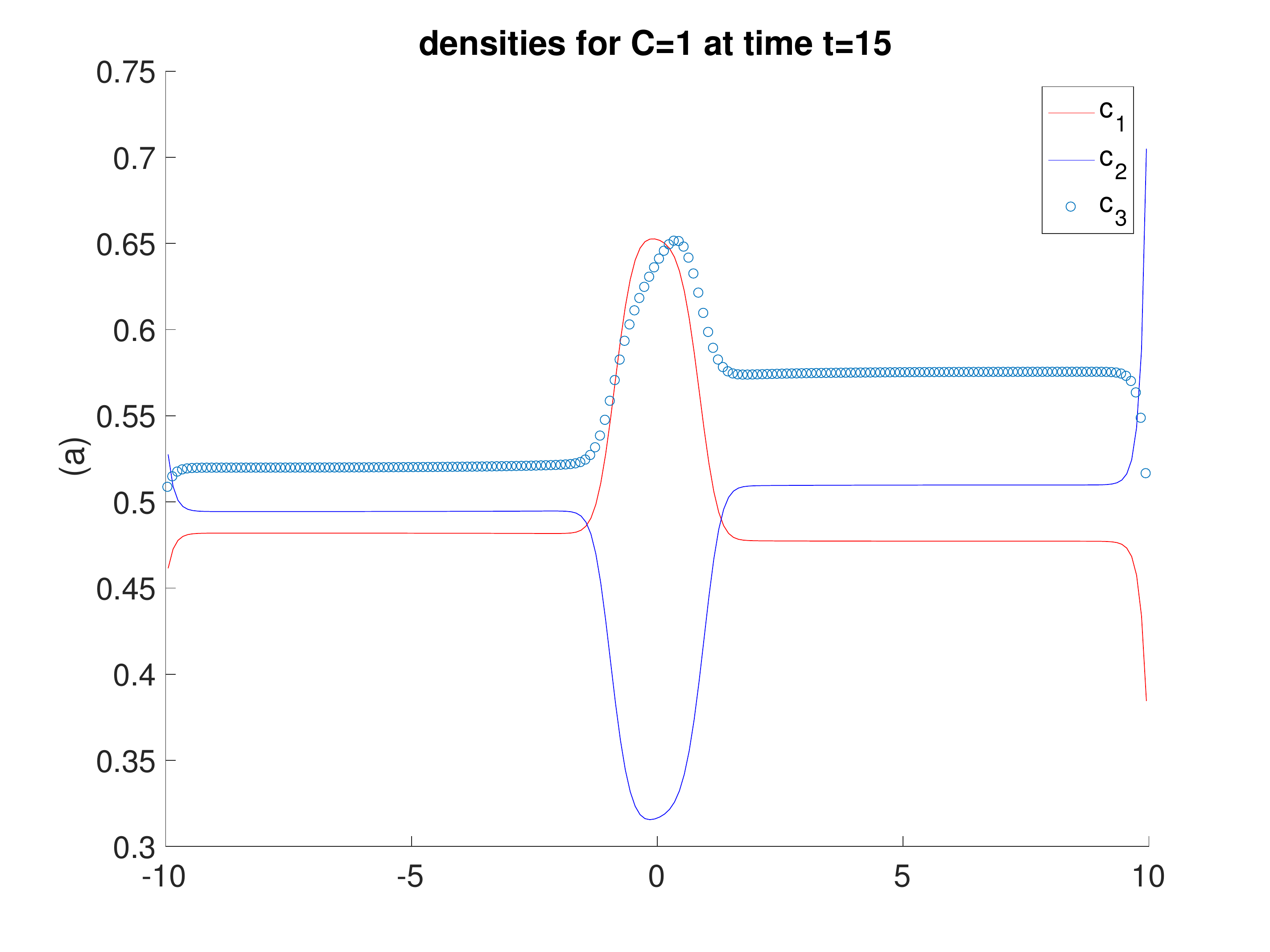}}
\subfigure{\includegraphics[width=0.45\linewidth]{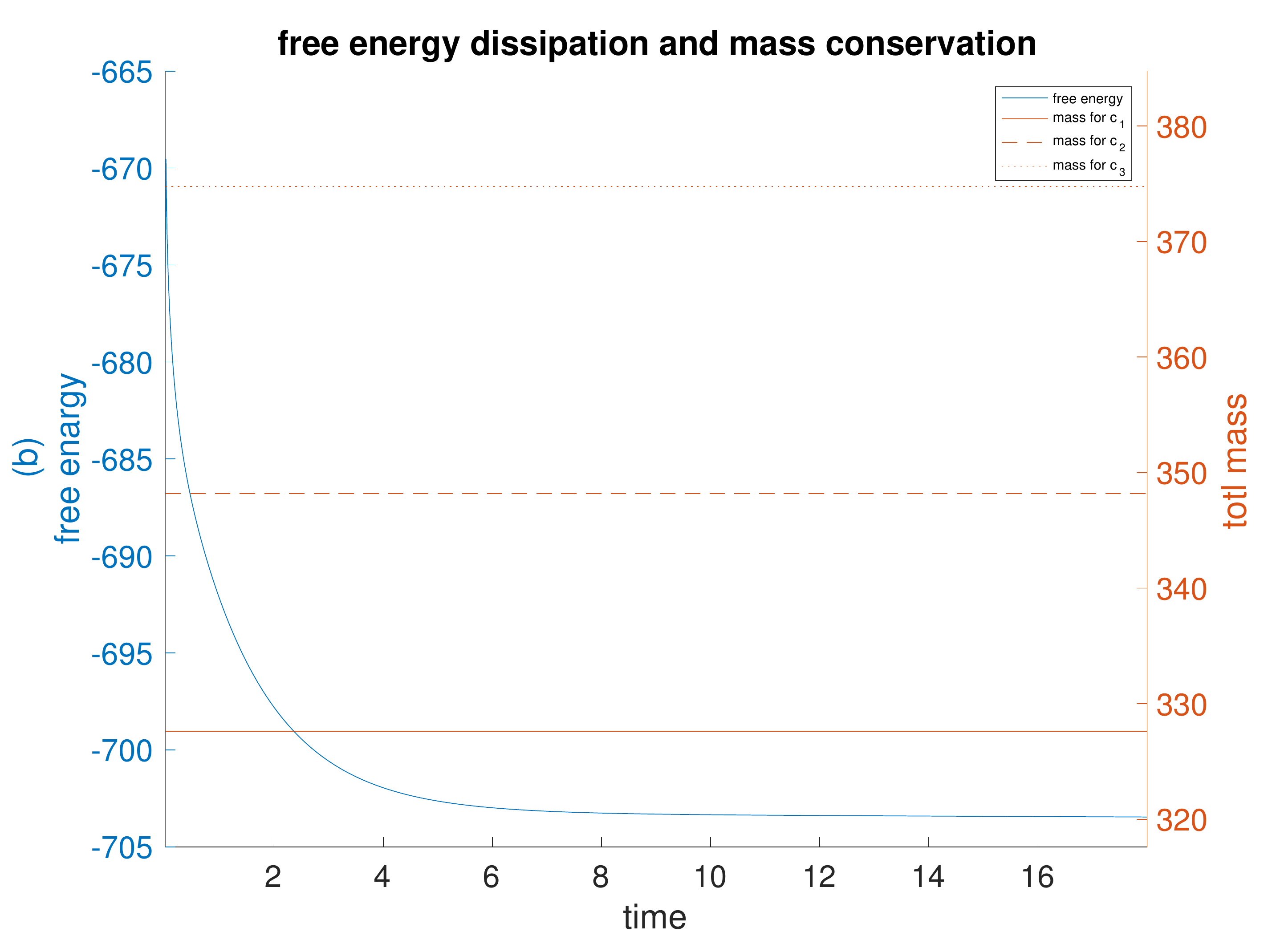}}
\end{figure}
In Figure 7 (left) are snap shots of solutions for initial data (\ref{c1}), (right) is free energy for the system and total mass for each species, which confirms energy dissipation and mass conservation properties as proved in Theorem 3.2.
\end{example}
%%%%%%%%%%%%%%%%%%%%%%%%%%%%%%%%%%%%%%%%%%%%%%%%%%%%%%%%%%%%%%%%%%%%%%%%%%%%%%%%%%%%%%
%Concluding remarks
%%%%%%%%%%%%%%%%%%%%%%%%%%%%%%%%%%%%%%%%%%%%%%%%%%%%%%%%%%%%%%%%%%%%%%%%%%%%%%%%%%%%%%
\section{Concluding Remarks}
In this paper, we have developed an unconditional positivity-preserving finite-volume method for solving initial boundary value problems for the reduced Poisson-Nernst-Planck system. Such a reduced system has been used as a good approximation to the 3D ion channel problem. By writing the underling system in non-logarithmic Landau form and using a semi-implicit time discretization, we constructed a simple, easy-to-implement numerical scheme which proved to satisfy positivity independent of time steps and the choice of Poisson solvers. Our scheme also preserves total mass and satisfies a free energy dissipation property for zero flux boundary conditions. Extensive numerical tests have been presented to simulate ionic channels in different settings.

\section*{Acknowledgments} The authors would like to thank Robert Eisenberg for stimulating discussions on PNP systems and their role in 
modeling ion channels.  This research was supported by the National Science Foundation under Grant DMS1812666 and by NSF Grant RNMS (KI-Net) 1107291.

\end{document}